 \newlength{\baseunit}               
\newcommand{\sym}{{\mathrm{sym}}}
\newcommand{\Prob}{{\mathbb{P}}}
\newcommand{\toD}{{\; \stackrel{d}{\longrightarrow} \;}}
\newcommand{\cG}{{\mathcal{G}}}
\newcommand{\SL}{{\mathrm{SL}}}
\newcommand{\zed}{\mathbb{Z}}
\newcommand{\uH}{\mathbb{H}}
\newcommand{\bR}{\mathbb{R}}
\newtheorem{theorem}{Theorem}[section]
\newtheorem{cor}[theorem]{Corollary}
\newtheorem{lemma}[theorem]{Lemma}
\newtheorem{prp}[theorem]{Proposition}
\newtheorem{hypothesis}[theorem]{Hypothesis}
\newtheorem{conjecture}[theorem]{Conjecture}
\theoremstyle{remark}
\newcommand{\notation}[1]{}
\newcommand{\remind}[1]{{}}
\newcommand{\lremind}[1]{{}}
\newcommand{\secretnote}[1]{}
\begin{document}
\pagestyle{plain}
\title{The distribution of the logarithm in an orthogonal and
  a symplectic family of $L$-functions}

\author{Bob Hough}
\email{rdhough@gmail.com}
\maketitle

\begin{abstract}
 We consider the logarithm of the central value $\log L(1/2)$ in the orthogonal
family ${L(s,f)}_{f \in H_k}$ where $H_k$ is the set of weight $k$ Hecke-eigen
cusp form for $SL_2(\zed)$, and in the symplectic family ${L(s,\chi_{8d})}_{d
\asymp D}$ where $\chi_{8d}$ is the real character associated to fundamental
discriminant $8d$. Unconditionally, we prove that the two distributions are
asymptotically bounded above by Gaussian distributions, in the first case of
mean $-1/2 \log \log k$ and variance $\log \log k$, and in the second case of
mean $1/2\log \log D$ and variance $\log \log D$. Assuming both the Riemann and
Zero Density Hypotheses in these families we obtain the full normal law in both
families, confirming a conjecture of Keating and Snaith. 
\end{abstract}

{\parskip=12pt 
\section{Introduction} An important problem in analytic number theory is to
understand the
distribution of values of $L$-functions on the central line $\Re(s) =
\frac{1}{2}$.  Selberg \cite{selberg_zeta} famously proved that as $t$ varies in
large intervals $t \in [T, 2T]$, the real and imaginary parts of the
logarithm of Riemann's zeta function become distributed like independent 
Gaussian random variables.  Since that work, there have been several
efforts to extend the result to a more general setting.  A few years
later, Selberg himself \cite{selberg_dirichlet} proved that for a
fixed value of $t$ the imaginary 
part of $\log L(\frac{1}{2} +it; \chi)$ becomes normally distributed
as $\chi$ varies among Dirichlet characters to a large prime modulus
$q$.  More recently, Bombieri and Hejhal
\cite{bombieri_hejhal} have shown that Selberg's result for zeta is
true for the values $\{L(\frac{1}{2} + it)\}_{t \in [T, 2T]}$ of a
quite general $L$-function, under widely believed assumptions about the zeros of
the function, and Wenzhi Luo \cite{luo} has verified this condition for the
$L$-function associated to any fixed modular form for $SL_2(\zed)$. 

Following the ground-breaking work of Katz and Sarnak \cite{katz_sarnak}, we
now understand the central values $L(\frac{1}{2} + it)$ of an $L$-function as
belonging in a family with a symmetry type governed by one
of the classical compact groups.  The cases considered thus far, of a
fixed $L$-function with argument high in the critical strip, and of
central values of Dirichlet $L$-functions with varying character of
fixed conductor, arise as unitary families. On the basis of
calculations from random matrix theory, Keating and Snaith \cite{keating_snaith}
have
proposed analogous Selberg-type conjectures for the logarithms of central values
of
$L$-functions from families of orthogonal and symplectic symmetry
type, as well.  These conjectures appear far from reach, 
however, because they involve only the real part of the logarithm of
$L$-functions at the fixed point $s = \frac{1}{2}$; even the best
known analytic methods have thus far only succeeded in proving that a positive
proportion of $L$-functions in a family are non-zero at a single
point, and even when the central value is
known to be non-negative, the real part of the logarithm is highly
sensitive to the `low-lying' zeros, near $\frac{1}{2}$, which cannot
presently be controlled.  We will, however, prove partial results  in two
such cases.

Let $S_k,\; k \equiv 0 \bmod{2}$, be the space of weight $k$
modular cusp forms
for $\SL_2(\zed) \backslash \uH$ and let $H_k$ be its basis of
$\sim \frac{k}{12}$ simultaneous eigenvectors of the Hecke
operators, normalized to have first Fourier coefficient equal to 1.
Let $f \in H_k$ have Fourier expansion 
\[f(z) = \sum_{n = 1}^\infty n^{\frac{k-1}{2}}\lambda_f(n)e(nz). \] 
The  $L$-function $L(s,f)$ associated to $f$ is then
\begin{equation}\label{euler_product} L(s,f) = \sum_{n = 1}^\infty
\frac{\lambda_f(n)}{n^s} = \prod_{p}\left(1 -
\frac{\lambda_f(p)}{p^s} + \frac{1}{p^{2s}}\right)^{-1}.\end{equation}
This has completed $L$-function \[\Lambda(s,f) =
(2\pi)^{-s}
\Gamma\left(s + \frac{k-1}{2}\right) L(s,f),\] which satisfies the self-dual
functional equation \[\Lambda(s,f) = i^k\Lambda(1-s,f).\]  When $k
\equiv 2\bmod 4$ this means that the central value $L(\frac{1}{2}, f)
= 0$, so for $k \equiv 0 \bmod 4$ we consider the family of values
$\{L(\frac{1}{2},f)\}_{f \in H_k},$ which is expected to have
orthogonal symmetry type.  These central values have a certain extra
significance because Kohnen and Zagier proved the striking formula
\[L\left(\frac{1}{2},f\right) = \frac{c_g(1)^2\pi^k}{(k-1)!} \frac{\langle f,
  f\rangle}{\langle g, g\rangle} \] relating the central value
$L(\frac{1}{2},f)$ to the Petersson norm of $f$ and the first Fourier
coefficient and Petersson norm of a
half-integral weight form $g$ that lifts to $f$ under the Shimura
correspondance. A particular consequence is that $L(\frac{1}{2}, f) \geq0$; this
is
one of
the few families of $L$-functions where non-negativity of the central value is
known, although see \cite{lapid} for a number of further examples.

As a second example we let $d > 0$  be a
fundamental discriminant with associated quadratic character
$\chi_{d}(n) = \left(\frac{d}{n}\right)$ of conductor $d$.  The
corresponding  Dirichlet $L$-function is \[L(s, \chi_{d}) =
\sum_{n =
  1}^\infty \frac{\chi_{d}(n)}{n^s} = \prod_p \left(1 -
  \frac{\chi_{d}(p)}{p^s}\right)^{-1},\qquad \qquad \Re(s) > 1\] with
completed $L$-function \[\Lambda(s, \chi_{d}) =
\left(\frac{d}{\pi}\right)^{\frac{s+a}{2}} \Gamma\left(\frac{s+a}{2}\right) L(s,
\chi_{d}), \qquad a = \frac{1 - \chi_d(-1)}{2}.\]  This also satisfies the
self-dual functional equation
\[\Lambda(s, \chi_d) =  \Lambda(1-s, \chi_{d})\] and conjecturally
$L(\frac{1}{2}, \chi_{d}) \geq 0$, but this  
is not known.  For convenience we consider the family of central values
$\{L(\frac{1}{2}, \chi_{8d})\}_{d \in s(D)}$ where $s(D)$ denotes the
set of squarefree and odd  $d$, $\frac{D}{2} < d \leq D$. In particular
$\chi_{8d}(-1) = 1$ so that $a = 0$ above. This is
expected to be a family exhibiting symplectic symmetry.

We have two primary results.  The first result proves,
unconditionally, `one-half' of the Keating-Snaith conjectures.
\begin{cor}\label{upper_bound}
Let $k \equiv 0 \bmod{4}$.  As $k \to \infty$ we have
\begin{align*}\Prob\left[f \in H_k: \frac{1}{\sqrt{\log \log k}}\left(\log
    L\left(\frac{1}{2},f\right)+\frac{1}{2}\log \log k\right) > A\right]\qquad &
\\ \leq
  \frac{1}{\sqrt{2\pi}}\int_A^\infty e^{-\frac{x^2}{2}} dx +
o_A(1).&\end{align*}
In particular, for any fixed $\epsilon >0$, $L(\frac{1}{2}, f) < (\log
k)^{-\frac{1}{2} + \epsilon}$ with probability $1- o_\epsilon(1)$.
Also, as $D \to \infty$,
\begin{align*}
 \Prob\left[d \in s(D): \frac{1}{\sqrt{\log \log D}}\left(\log
    \left|L\left(\frac{1}{2},\chi_{8d}\right)\right|-\frac{1}{2}\log \log
D\right) > A\right] \qquad&\\ \leq
  \frac{1}{\sqrt{2\pi}}\int_A^\infty e^{-\frac{x^2}{2}} dx + o_A(1).&
\end{align*}
\end{cor}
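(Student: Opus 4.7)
The plan is to follow Soundararajan's method: first prove a one-sided upper bound relating $\log |L(\tfrac12, f)|$ to a short Dirichlet polynomial over primes plus a deterministic mean shift, then compute high moments of the polynomial via the relevant trace formula and convert them into a tail bound.

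\textbf{Step 1: upper bound by a prime sum.} I would first establish, unconditionally, an inequality of the form
$$\log |L(\tfrac12, f)| \le \sum_{p \le x} \frac{\lambda_f(p)}{\sqrt p}\,\frac{\log(x/p)}{\log x} + \frac{1}{2}\sum_{p \le x} \frac{\lambda_f(p)^2 - 2}{p} + O\!\left(\frac{\log k}{\log x}\right),$$
valid for $x \ge 2$, together with its analogue for $\log|L(\tfrac12,\chi_{8d})|$ in which $\chi_{8d}(p)^2$ replaces $\lambda_f(p)^2$. This comes from the explicit formula for $\log L$ as a sum over primes and non-trivial zeros, combined with the observation that $\Re\sum_\rho (s-\rho)^{-1} \ge 0$ for $s$ slightly to the right of $\tfrac12$, so the zero sum may be discarded in an upper bound. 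Decomposing $\lambda_f(p)^2 - 2 = (\lambda_f(p)^2 - 1) - 1$ separates a mean-zero piece of bounded variance from the deterministic shift $-\tfrac12 \sum_{p \le x} 1/p \sim -\tfrac12 \log\log x$, which gives the Keating--Snaith mean; for the symplectic family $\chi_{8d}(p)^2 = 1$ generically produces the opposite sign $+\tfrac12\log\log D$.

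\textbf{Step 2: moments of the prime polynomial.} Let $\mathcal P_f(x) = \sum_{p\le x}\lambda_f(p) p^{-1/2}\log(x/p)/\log x$ denote the random piece. I would then compute the $2m$-th moment of $\mathcal P_f(x)$ averaged over $H_k$ against the harmonic weights $\omega_f$ via the Petersson trace formula. For each fixed $m$, the identity term produces the Gaussian moment
$$\frac{(2m)!}{m!\, 2^m}\, V^m, \qquad V = \sum_{p\le x}\frac{1}{p} \sim \log\log x,$$
while the Kloosterman contribution is $O(k^{-\delta} x^{Cm})$ and hence negligible provided $\log x = o(\log k)$. The symplectic case is parallel: Poisson summation over squarefree odd $d$ isolates the square arguments $\chi_{8d}(p_1 \cdots p_{2m})$, whose average is $1$ and contributes the same Gaussian moment, while the non-square remainder is small.

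\textbf{Step 3 and main obstacle.} With $\log x$ chosen so that $\log k/\log x = o(\sqrt{\log\log k})$ yet still $o(\log k)$ (e.g.\ $\log x = \log k/(\log\log k)^{1/4}$), so that $V \sim \log\log k$ and the Step 1 error is negligible after dividing by $\sqrt V$, the method of moments gives that $(\mathcal P_f(x) + \text{shift})/\sqrt V$ converges in distribution to a standard Gaussian; since $\log|L(\tfrac12,f)|$ is dominated by this quantity by Step 1, the Gaussian upper tail claimed in the corollary follows. The in-particular statement $L(\tfrac12, f) < (\log k)^{-1/2 + \epsilon}$ follows by applying the bound with $A \to \infty$ slowly with $k$. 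The principal technical difficulty is proving Step 1 unconditionally: without GRH one loses Soundararajan's clean explicit-formula argument and must instead exploit positivity of the zero-sum contribution together with the positivity $L(\tfrac12,f) \ge 0$ afforded by Kohnen--Zagier (working directly with $|L|$ in the symplectic case, where positivity is not known). A secondary, delicate balancing issue is that $x$ must be simultaneously large enough that the Step 1 error is negligible on the Gaussian scale, and small enough that the off-diagonal contributions in Step 2 do not overwhelm the diagonal.
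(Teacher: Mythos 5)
Your overall strategy mirrors the paper's: bound $\log|L(\tfrac12,*)|$ from above by a prime sum plus a deterministic mean shift, then establish a Gaussian law for the prime sum by the method of moments via Petersson/Poisson orthogonality. Steps 2 and 3 correspond closely to the paper's Proposition \ref{prime_sum} and Theorem \ref{right_of_half}.

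The gap, which you yourself flag but do not close, is Step 1. The justification you offer — that $\Re\sum_\rho(s-\rho)^{-1}\ge 0$ for $s$ slightly right of $\tfrac12$, so the zero sum can be discarded — is exactly Soundararajan's observation and is \emph{conditional on RH}: a non-trivial zero $\rho=\tfrac12+\beta+i\gamma$ with $\beta>\sigma$ contributes $(\sigma-\beta)/((\sigma-\beta)^2+\gamma^2)<0$, wrecking the sign. Appealing to Kohnen--Zagier positivity of $L(\tfrac12,f)$ does not help here: that fact says nothing about the zero sum, and moreover is unavailable in the symplectic family where the same inequality is needed. The paper's actual resolution (Proposition \ref{our_upper_bound}) is structurally different: instead of a fixed shift to the right, it uses an \emph{adaptive} perturbation $\sigma_{x,*}$ depending on the zeros of the particular $L$-function (defined in (\ref{def_sigma}) so that all ``dangerous'' zeros are to its left), pairs each zero $\rho=\tfrac12+\beta+i\gamma$ with its functional-equation reflection $\rho'=\tfrac12-\beta+i\gamma$, and shows the combined contribution $\log\bigl[1+\tfrac{\sigma_{x,*}^2}{\beta^2+\gamma^2}(2+\tfrac{\sigma_{x,*}^2-4\beta^2}{\beta^2+\gamma^2})\bigr]$ is nonnegative precisely because the definition of $\sigma_{x,*}$ forces $|\gamma|>x^{3\beta}/\log x\ge3\beta$ for the unpaired zeros. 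One then needs an unconditional zero-density estimate (the paper's Theorem \ref{zero_density}, from Conrey--Soundararajan and the author's companion paper) to show $\sigma_{x,*}=4/\log x$ off an exceptional set of measure $o(1)$, so that the perturbation does not spoil the Gaussian scaling. Without this adaptive construction and the zero-density input, Step 1 is unproven, and the argument as written would establish the corollary only under RH for the family.
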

\noindent In \cite{sound_moments}, Soundararajan made the basic
observation that, on the Riemann Hypothesis, while zeros near $\frac{1}{2}+it$
can greatly alter the
value of $\log |\zeta(\frac{1}{2}+it)|$, they tend to decrease its
value as compared with that of $\log |\zeta(\frac{1}{2} + \sigma +
it)|$ at points off the critical line.  Our proof of Corollary
\ref{upper_bound} is based upon an unconditional version of this fact,
together with the following slightly  technical result. 
\begin{theorem}\label{right_of_half}
 Let $\sigma = \sigma(k)$ be a function of $k$, tending to 0 as $k \to
\infty$ in such a way that $\sigma \log k \to \infty$ but $\frac{\sigma \log
k}{\sqrt{\log \log k}}\to 0$.  Also, for $f \in H_k$ put \[A(f) =
\frac{1}{\sqrt{\log \log k}}\left(\log \left|L\left(\frac{1}{2} + \sigma,
f\right)\right| +
  \frac{1}{2} \log \log k\right).\]   Then 
\[\frac{1}{|H_k|} \sum_{f \in H_k} \delta_{A(f)} \to N(0, 1), \qquad k
\to \infty.\]  Here
$\delta_x$ is the point mass at $x$, 
$N(0,1)$ is the standard normal distribution, and the convergence is
in the sense of distributions.

Similarly, let $\sigma = \sigma(D)$ be a function of $D$, tending to 0
as $D \to \infty$ in such a way that $\sigma \log D \to \infty$ but
$\frac{\sigma \log D}{\sqrt{\log \log D}} \to 0$.  For $d \in s(D)$,
put \[A(d) = \frac{1}{\sqrt{\log \log D}} \left(\log \left|L\left(\frac{1}{2} +
  \sigma, \chi_{8d}\right)\right| - \frac{1}{2}\log \log D\right).\]  Then
\[\frac{1}{|s(D)|} \sum_{d \in s(D)} \delta_{A(d)} \to N(0,1), \qquad
D \to \infty.\]
\end{theorem}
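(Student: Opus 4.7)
The plan is to establish both convergences by the method of moments, approximating $\log|L(\tfrac12+\sigma,\cdot)|$ by a short prime sum and computing moments of that sum across the family: via the Petersson trace formula in the orthogonal case, and via averages of quadratic characters in the symplectic case. The hypotheses on $\sigma$ are exactly what is needed: $\sigma\log k\to\infty$ puts us off the critical line so that the approximation can be made unconditionally (using a zero-density estimate for the family in place of RH), while $\sigma\log k/\sqrt{\log\log k}\to 0$ keeps the variance of the short prime sum at $\sim\log\log k$ (its effective range is $p\lesssim e^{1/(2\sigma)}$, a small power of $k$).

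For the orthogonal family, Deligne's bound $|\lambda_f(p)|\leq 2$ reduces the log-Euler expansion, modulo a bounded contribution from $k\geq 3$ prime powers, to
$$\log|L(\tfrac12+\sigma,f)| = \sum_{p\leq X}\frac{\lambda_f(p)}{p^{1/2+\sigma}} + \frac{1}{2}\sum_{p\leq X}\frac{\lambda_f(p)^2-2}{p^{1+2\sigma}} + E(f),$$
with $X=k^\eta$ for some small fixed $\eta>0$. The harmonically weighted Petersson formula yields $\sum^h_{f\in H_k} \lambda_f(p)^2 = 1 + o(1)$ uniformly for $p\leq X$, so the second sum has family-mean $-\tfrac12\sum_{p\leq X}p^{-1-2\sigma}\sim -\tfrac12\log\log k$ with fluctuations of variance $O(\sum_p p^{-2-4\sigma})=O(1)$, supplying the centering after subtraction. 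For the first sum I would compute the $j$-th moment by expanding the $j$-fold product: Hecke multiplicativity and $\lambda_f(p)^2=\lambda_f(p^2)+1$ reduce it to a linear combination of $\lambda_f(n)$'s, and the Petersson diagonal
$$\sum^h_{f\in H_k} \lambda_f(m)\lambda_f(n) = \delta_{m=n} + (\text{Kloosterman error})$$
extracts the perfect-matching contribution, producing $(2m-1)!!\bigl(\sum_{p\leq X}p^{-1-2\sigma}\bigr)^m$ for $j=2m$ and a negligible quantity for odd $j$. Normalizing by $\sqrt{\log\log k}$ these match the moments of $N(0,1)$; Kloosterman errors are controlled because $X$ is only a small power of $k$, and the conversion between harmonic and uniform averages on $H_k$ loses only $k^\epsilon$.

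The symplectic case is parallel, using the equidistribution $\frac{1}{|s(D)|}\sum_{d\in s(D)}\chi_{8d}(n) = \delta_{n=\square} + o(1)$ (for $n\leq D^{o(1)}$, via Poisson summation on the quadratic character) in place of Petersson. The sign of the centering flips because $\chi_{8d}(p)^2=1$ for $p\nmid 8d$ unconditionally, so the $k=2$ term of the log-Euler expansion contributes the deterministic $+\tfrac12\log\log D$; the $k=1$ sum is a weighted sum of asymptotically independent $\pm 1$ variables whose moments are Gaussian by the same matching argument. The main obstacle throughout is the unconditional control of $E(f)$ (and its symplectic analogue): since we are $\sigma$ off the critical line, a zero-density estimate bounding the number of $L$-functions in the family with zeros in a neighborhood of $\Re s=\tfrac12+\sigma$ suffices to handle all but a negligible exceptional subset, on whose complement the short Dirichlet polynomial faithfully represents $\log|L|$ up to an $o(\sqrt{\log\log k})$ error; this is the step that replaces the Riemann Hypothesis in the Bombieri--Hejhal / Luo framework.
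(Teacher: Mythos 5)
Your proposal is essentially the same approach as the paper's: approximate $\log|L(\tfrac12+\sigma,*)|$ by a short Dirichlet polynomial over primes, prove Gaussian convergence of that polynomial by the method of moments (via the Petersson trace formula in the orthogonal case, quadratic-character orthogonality in the symplectic case, with the $\mp\tfrac12\log\log$ centering coming from the $p^2$ terms), and control the discrepancy between the polynomial and $\log|L|$ outside a negligible exceptional set using the zero-density estimate for the family. The paper implements the final step through Selberg's specific machinery --- the weights $a_x(n)$, the perturbation $\sigma_{x,*}$ adapted to nearby zeros, and an $L^2$ comparison via Lemma \ref{conv_dist_lemma} --- rather than a pointwise $o(\sqrt{\log\log k})$ bound as you phrase it, but you correctly identify this as the Selberg / Bombieri--Hejhal mechanism, so the difference is one of precision rather than substance.
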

\noindent This Theorem is proven using Selberg's method in
\cite{selberg_dirichlet}; in particular it makes use of 
`zero-density' estimates putting almost all of the low-lying zeros of
the corresponding $L$-functions very near the half-line.  In the case
of $L(s, \chi_{8d})$, such a result is essentially available from the
work of Conrey and Soundararajan in \cite{conrey_sound}.  For the case of
$L(s,f)$  see
\cite{hough_zero_density}.

For our second main result we assume some weak conjectural information
about the low-lying zeros in the families $\{L(s,f)\}_{f \in H_k}$,
and $\{L(s, \chi_{8d})\}_{d \in s(D)}$ in order to deduce the full
Keating-Snaith conjectures for these families. Given $f \in H_k$ and
$s$ near $\frac{1}{2}$,  $L(s,f)$ has conductor $\asymp k^2$, and
therefore for $1 \ll T = k^{o(1)}$ the number of zeros of $L(s,f)$ up
to height $T$ grows as $\frac{T}{\pi}\log k$.  Reasoning
probabilistically, we
might then expect that for most $f \in H_k$,
$\gamma_{min}(f) \gg \frac{1}{\log k}$, where  \[\gamma_{min}(f) =
\min_{\rho =\frac{1}{2} +  \beta + i\gamma} |\gamma|\] is the height of the
lowest
non-trivial zero of $L(s,f)$.  Similarly, for $d \in s(D)$ and $s$
near $\frac{1}{2}$, $L(s,
\chi_{8d})$ has conductor $\asymp D$, and therefore we might typically
expect that $\gamma_{min}(d) \gg \frac{1}{\log D}$.  We formalize this
heuristic in the following hypothesis.
\begin{hypothesis}[Low-lying Zero Hypothesis]\label{low_zero_hypothesis}
 Assume $y = y(k) \to \infty$ with $k$.  Then
\[\Prob\left[ f \in H_k: \gamma_{min}(f) < \frac{\pi}{y \log k}\right]=o(1),
\qquad k \to \infty.\]  Similarly, if $y = y(D) \to \infty$ with $D$
then
\[\Prob\left[ d \in s(D): \gamma_{min}(d) < \frac{2\pi}{y \log
    D}\right] = o(1), \qquad D \to \infty.\]
\end{hypothesis}
\noindent In fact, stronger and more detailed statements about the low-lying
zeros in these two families are expected to be true.  Specifically,
Iwaniec, Luo and Sarnak \cite{iwaniec_luo_sarnak} have conjectured
that in essentially any natural family of $L$-functions of conductor $C$, the
one-level density of
zeros at a scale of $\frac{2\pi}{\log C}$ depends asymptotically only
on the symmetry type of the family.  For our two families of
$L$-functions, their `Zero Density Conjecture' takes the following shape.
\begin{conjecture}[Zero Density Conjecture]
 Let $\phi(x)$ be a Schwarz class function on $\bR$ with Fourier transform
having compact support.  Define the densities \[W(SO_{\text{even}})(x)dx =
\left(1 +
\frac{\sin 2\pi x}{2\pi x}\right)dx, \qquad W(Sp)(x)dx = \left(1 - \frac{\sin
  2\pi x}{2\pi x}\right) dx\] and write the non-trivial zeros of 
$L(s)$ as $\rho = \frac{1}{2} + i\gamma$, with $\gamma$ possibly complex if
the Riemann Hypothesis for $L(s)$ is false.  Then
\[\lim_{\substack{k \to \infty \\ k\equiv 0 \bmod 4}} \frac{1}{|H_k|}{\sum_{f
\in H_k}}
\sum_{\substack{\Lambda(\rho,f)=0\\ \rho = \frac{1}{2} + i\gamma}}
\phi\left(\frac{\gamma \log k}{\pi}\right) = \int_{-\infty}^\infty \phi(x)
W(SO_{\text{even}})(x)dx\]
and 
\[\lim_{D \to \infty} \frac{1}{|s(D)|} \sum_{d \in s(D)}
\sum_{\substack{\Lambda(\rho, \chi_{8d}) = 0\\ \rho = \frac{1}{2} +
    i\gamma}}\phi\left(\frac{\gamma \log D}{2\pi}\right) =
\int_{-\infty}^\infty \phi(x)W(Sp)(x)dx.\]
\end{conjecture}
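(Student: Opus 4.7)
\noindent\emph{Proof proposal.} My plan is to attempt the conjecture via the standard Iwaniec-Luo-Sarnak one-level density machinery: in each family, apply the Riemann-von Mangoldt explicit formula for $L(s)$ to rewrite the zero-sum against $\phi$ as a sum over primes of the associated Dirichlet coefficients, and then average over the family using the Petersson trace formula in the first case and Poisson summation in the second.  For $\hat\phi$ of sufficiently restricted support the off-diagonal terms can be controlled and the diagonal reproduces exactly the asserted symmetry kernel; the main obstacle, as I discuss at the end, is that the conjecture as stated allows $\hat\phi$ of arbitrary compact support.

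For $\{L(s,f)\}_{f \in H_k}$ I would begin from the explicit formula
\[ \sum_{\gamma_f} \phi\!\left(\frac{\gamma_f \log k}{\pi}\right) = \hat\phi(0) - \frac{2}{\log k}\sum_{n \geq 2}\frac{\Lambda_f(n)}{\sqrt{n}}\,\hat\phi\!\left(\frac{\log n}{\log k}\right) + O\!\left(\frac{1}{\log k}\right), \]
where $\Lambda_f(p) = \lambda_f(p) \log p$ and $\Lambda_f(p^2) = (\lambda_f(p)^2 - 2)\log p$, higher prime powers being negligible.  Averaging against the harmonic weights $\omega_f \propto 1/\langle f, f\rangle$ via the Petersson trace formula, the linear sum in $\lambda_f(p)$ reduces to Kloosterman-Bessel terms which are negligible for $\operatorname{supp} \hat\phi \subset (-1,1)$, by the Weil bound together with the decay $J_{k-1}(x) \ll (x/k)^{k-1}$; the quadratic sum, upon using the Hecke relation $\lambda_f(p)^2 = \lambda_f(p^2) + 1$, contributes a diagonal of shape $\frac{2}{\log k}\sum_p \frac{\log p}{p}\hat\phi(2\log p/\log k)$, which the prime number theorem and Plancherel convert to $\int \phi(x)\,\frac{\sin 2\pi x}{2\pi x}\,dx$.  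Together with the $\hat\phi(0) = \int \phi$ term this produces the $W(SO_{\text{even}})$ density, and removing the harmonic weights to recover the natural average is routine following Iwaniec-Luo-Sarnak.

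For $\{L(s,\chi_{8d})\}_{d \in s(D)}$ the same strategy yields a linear prime sum in $\chi_{8d}(p)/\sqrt{p}$ and a prime-square sum, using $\chi_{8d}(p^2) = \mathbf{1}_{p \nmid 2d}$.  I would average over squarefree odd $d \in (D/2, D]$ by Poisson summation in the style of Conrey-Soundararajan: the linear sum exhibits square-root cancellation via $\sum_d \chi_{8d}(p) \ll \sqrt{D}$ for non-square $p$, adequate for $\operatorname{supp}\hat\phi \subset (-2,2)$, while the $p^2$ term contributes a main diagonal of opposite sign to the previous case, giving $-\int \phi(x)\,\frac{\sin 2\pi x}{2\pi x}\,dx$ and reproducing $W(Sp)$.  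The hard part is extending past these ranges: breaking $(-1,1)$ demands cancellation in Kloosterman sums far beyond Weil, and breaking $(-2,2)$ demands cancellation in averages of quadratic characters far beyond Poisson combined with Heath-Brown, at a depth plausibly comparable to GRH for the respective families.  I would therefore expect this plan to yield only the restricted-support version of the conjecture unconditionally, with the full statement remaining out of reach.
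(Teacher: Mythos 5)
This statement is labeled a \emph{Conjecture} in the paper, attributed to Iwaniec--Luo--Sarnak; the paper offers no proof of it and uses it only as conjectural input (it, together with RH, implies the Low-lying Zero Hypothesis used in Theorem \ref{conditional}). You have correctly recognized this: your proposal does not claim to establish the conjecture, and you correctly identify the genuine obstruction, namely that the explicit-formula computation only closes for $\hat\phi$ of restricted support (roughly $(-1,1)$ for the harmonic-weighted orthogonal family via Petersson plus the Weil bound, $(-2,2)$ for the symplectic family via Poisson summation over squarefree moduli), whereas the conjecture demands arbitrary compact support --- which, as you note, would require cancellation in Kloosterman and quadratic-character sums at GRH-like depth. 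One small correction to your sketch: for $f \in H_k$ the conductor near $s = \tfrac12$ is $\asymp k^2$, so the scaling $\gamma \log k/\pi = \gamma \log(k^2)/(2\pi)$ is the natural one, and the Dirichlet-coefficient argument in the explicit formula should carry $\hat\phi\bigl(\log n / (2\log k)\bigr)$ rather than $\hat\phi(\log n/\log k)$; this does not affect the structure of your argument but does shift the support thresholds. Since there is no proof in the paper to compare against, there is nothing further to reconcile: you have correctly diagnosed what is provable, what is conjectural, and where the wall is.
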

\noindent It is a straightforward exercise to prove that our Low-lying
Zero Hypothesis is implied by the Zero Density Conjecture together
with the Riemann Hypothesis for the corresponding family of
$L$-functions.

We now state our second main result.
\begin{theorem}\label{conditional}
Suppose the Low-lying Zero Hypothesis holds for
$\{L(s,f)\}_{f \in H_k}$.  For $f \in H_k$ put \[B(f) = 
\frac{1}{\sqrt{\log \log k}}\left(\log L\left(\frac{1}{2},f\right) +
  \frac{1}{2}\log \log k\right).\]  Then, as distributions
\[ \frac{1}{|H_k|} \sum_{f \in H_k} \delta_{B(f)} \to N(0,1), \qquad k
\to \infty.\]   

Similarly, assume the Low-lying Zero Hypothesis for $\{L(s,
\chi_{8d})\}_{d \in s(D)}$ and for $d \in s(D)$ put \[B(d) =
\frac{1}{\sqrt{\log \log D}}\left(\log \left|L\left(\frac{1}{2},
\chi_{8d}\right)\right| -
    \frac{1}{2}\log \log D\right).\]  Then, in the sense of distributions,
\[ \frac{1}{|s(D)|} \sum_{d \in s(D)} \delta_{B(d)} \to N(0,1), \qquad
D \to \infty.\] 

In particular, either of these results is true if both the Riemann
Hypothesis and the Zero Density Hypothesis is true for the
corresponding family of $L$-functions.
\end{theorem}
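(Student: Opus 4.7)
The strategy is to leverage Theorem \ref{right_of_half}, which establishes Gaussian behavior for the values $\log|L(\tfrac{1}{2}+\sigma, f)|$ slightly to the right of the critical line, by showing that under the Low-lying Zero Hypothesis the shift can be taken essentially down to zero. I sketch the orthogonal case; the symplectic case is entirely analogous.

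Choose $\sigma = \sigma(k)$ satisfying the hypotheses of Theorem \ref{right_of_half} with $\sigma \log k$ tending to infinity as slowly as desired (say $\sigma \log k = \log\log\log k$). Theorem \ref{right_of_half} asserts that
$A(f) := \bigl(\log|L(\tfrac{1}{2}+\sigma, f)| + \tfrac{1}{2}\log\log k\bigr)/\sqrt{\log\log k}$
converges to $N(0,1)$, so it suffices to show $A(f) - B(f) \to 0$ in probability. The inequality $\log|L(\tfrac{1}{2},f)| \leq \log|L(\tfrac{1}{2}+\sigma, f)| + O(\sigma \log k)$ holds unconditionally by Soundararajan's observation (this is the engine of Corollary \ref{upper_bound}), giving $B(f) \leq A(f) + o(1)$. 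For the matching lower bound, the Hadamard factorization of $L(s,f)$ produces
\[
\log|L(\tfrac{1}{2}+\sigma, f)| - \log|L(\tfrac{1}{2}, f)| = \frac{1}{2}\sum_\rho \log\!\left(1 + \frac{\sigma^2}{|\gamma|^2}\right) + O(\sigma\log k),
\]
where $\rho = \tfrac{1}{2} + i\gamma$ runs over the non-trivial zeros (allowing $\gamma \in \C$ if RH fails). It remains to show this zero sum is $o(\sqrt{\log\log k})$ outside a set of forms of density $o(1)$.

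Split the zero sum at $|\gamma| = \sigma$. For far zeros $|\gamma| \geq \sigma$, the bound $\log(1+u) \leq u$ combined with the unconditional Riemann-von Mangoldt estimate $N(T, f) \ll T \log k + \log k$ and a dyadic decomposition produces a contribution of $O(\sigma \log k) = o(\sqrt{\log\log k})$. For close zeros $|\gamma| < \sigma$, invoke the Low-lying Zero Hypothesis with some $y(k) \to \infty$ chosen very slowly: on a set of density $1 - o(1)$, every zero of $L(s,f)$ satisfies $|\gamma| \geq \pi/(y \log k)$, so each close-zero summand is at most $\log(1 + (\sigma y \log k/\pi)^2) \ll \log(\sigma y \log k)$. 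Pairing this with the family-averaged first moment $\sum_{f \in H_k} N(\sigma, f) \ll |H_k|\,\sigma\log k$ (available from an explicit-formula / Petersson trace formula argument of the type used in the proof of Theorem \ref{right_of_half}), a typical form has at most $O(\sigma \log k + 1)$ close zeros; hence the total close-zero contribution is $O\bigl((\sigma \log k)\log(\sigma y \log k)\bigr)$, which is $o(\sqrt{\log\log k})$ when $\sigma \log k$ and $y$ are both chosen to grow slowly enough.

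The principal obstacle is precisely this close-zero estimate: the Low-lying Zero Hypothesis by itself restricts only the height of the smallest zero $\gamma_{\min}(f)$, not the multiplicity of low-lying zeros of an individual $L(s,f)$. Overcoming this requires supplementing the LZH with the family-averaged first-moment count of zeros within height $\sigma$ of the central point indicated above; without some such averaged input, the Hadamard sum cannot be shown to be $o(\sqrt{\log\log k})$. The symplectic case $\{L(s,\chi_{8d})\}_{d \in s(D)}$ follows by the same scheme, with the reversed sign of the centering reflecting the difference between the symmetry densities $W(SO_{\text{even}})$ and $W(Sp)$ at the origin.
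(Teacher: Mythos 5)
Your overall plan---apply Theorem \ref{right_of_half} at a slowly vanishing shift, take the upper bound for free from Proposition \ref{our_upper_bound}, and control the lower bound by showing the Hadamard zero sum is negligible off an exceptional set of density $o(1)$---agrees with the paper's, and you correctly flag that the Low-lying Zero Hypothesis controls only $\gamma_{\min}(f)$ and not the number of low-lying zeros. However, the two zero-count inputs you supply to close the gap are not what the paper uses, and one of them is wrong.

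The far-zero estimate fails. You claim $\sum_{|\gamma|\geq\sigma}\sigma^2/\gamma^2 = O(\sigma\log k)$ from the unconditional Riemann--von Mangoldt count, but that formula carries an error $O(\log k)$ (from $\arg L(\tfrac12+iT,f)$) which does not shrink with $T$: the dyadic shell $|\gamma|\in(\sigma,2\sigma]$ may hold as many as $O(\log k)$ zeros of an individual $L(s,f)$, each contributing $\asymp 1$, so unconditionally the far-zero contribution can be of size $\log k$, which is $\gg\sqrt{\log\log k}$. Fixing this within your scheme would itself require a family-averaged one-level density at scale $\sigma\log k$---exactly the further (unestablished) hypothesis you acknowledge needing for the close zeros. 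The paper avoids needing any zero-count estimate at all: it bounds each logarithmic summand uniformly by $\frac{\log y}{\log^2 x}\cdot\frac{1}{(\sigma-\beta)^2+\gamma^2}$ (which encodes both the LZH lower bound $|\gamma|\gg 1/(y\log x)$ and the $\gamma^{-2}$ decay), then applies the Selberg comparison (\ref{zero_sum_bound}) of Lemma \ref{log_L_off} to replace $\sum_\rho \frac{1}{(\sigma-\beta)^2+\gamma^2}$, for each individual form, by a short Dirichlet polynomial plus $O(\log C)$. The catchall $O(\log C)$ is then killed by the prefactor $\frac{\log y}{\log x}$ under the paper's parameter condition $\frac{\log C\log y}{\log x\sqrt{\log\log C}}\to 0$, and the Dirichlet polynomial has second moment $O(\log^2 y) = o(\log\log C)$ via (\ref{mean_error}). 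Your crude per-zero bound $\log(\sigma y\log k)$ for close zeros discards the distance decay in $\gamma$, which disconnects you from (\ref{zero_sum_bound})---and that comparison is precisely the device that lets the paper get by without postulating any family-averaged count of low-lying zeros.
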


\section{Background}\label{lemma_section}
 In this section we collect together standard facts regarding our two families
of $L$-functions, as well as the part of Selberg's work that we need for our
arguments.

\subsection{$L$-function coefficients, and orthogonality}
 For $f\in H_k$, the Fourier coefficients of $f$ satisfy the
Hecke relations
\begin{equation}\label{hecke_relations} \lambda_f(m)\lambda_f(n) =
\sum_{d|(m,n)} \lambda_f\left(\frac{mn}{d^2}\right).\end{equation}
A specific consequence of this fact is that for distinct primes $p_1,
..., p_r$ we have
\begin{equation}\label{prime_product}\lambda_f(p_1)^{e_1}\lambda_f(p_2)^{e_2}
\cdots
\lambda_f(p_r)^{e_r} =
\sum_{0 \leq j_1 \leq \lfloor\frac{e_1}{2}\rfloor} \cdots \sum_{0 \leq
  j_r \leq \lfloor\frac{e_r}{2}\rfloor}c(\mathbf{e},
\mathbf{j})\lambda_f(p_1^{e_1 - 2j_1}
\cdots p_r^{e_r - 2 j_r})\end{equation} for some positive coefficients
$c(\mathbf{e}, \mathbf{j})$.
\begin{lemma}
 We have $c(\mathbf{2}, \mathbf{*}) = 1$ where $\mathbf{2}$ is the string
consisting entirely of 2's and  $\mathbf{*}$ is any string containing only 0's
and
1's. Also, for general $\mathbf{e}, \mathbf{j}$,  $c(\mathbf{e}, \mathbf{j})
\leq 2^{e_1 + ... + e_r}.$
\end{lemma}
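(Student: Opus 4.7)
The plan is to reduce the multi-prime statement to a single-prime computation, using the multiplicativity of $\lambda_f$ on coprime indices, and then control the single-prime coefficients by iterating the Hecke relation.

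First I would record the key consequence of \eqref{hecke_relations} in the prime-power case, namely
\[
\lambda_f(p)\lambda_f(p^n) \;=\; \lambda_f(p^{n+1}) + \lambda_f(p^{n-1}) \qquad (n\ge 1),
\]
and $\lambda_f(p)\lambda_f(1)=\lambda_f(p)$. Since $\lambda_f$ is multiplicative at coprime arguments, expanding each factor $\lambda_f(p_i)^{e_i}$ separately and multiplying gives
\[
\prod_{i=1}^r \lambda_f(p_i)^{e_i} \;=\; \sum_{\mathbf{j}} \Bigl(\prod_{i=1}^r c(e_i,j_i)\Bigr)\,\lambda_f\bigl(p_1^{e_1-2j_1}\cdots p_r^{e_r-2j_r}\bigr),
\]
so that $c(\mathbf{e},\mathbf{j})=\prod_{i=1}^r c(e_i,j_i)$. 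This reduces both assertions to the analogous one-prime statements: $c(2,0)=c(2,1)=1$, and $c(e,j)\le 2^e$ for all $e,j$.

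For the first assertion, a single application of the Hecke relation gives $\lambda_f(p)^2 = \lambda_f(p^2)+\lambda_f(1)$, hence $c(2,0)=c(2,1)=1$. Taking the product over the indices $i$ with $e_i=2$ (where each factor is $1$) yields $c(\mathbf{2},\mathbf{*})=1$ for any $\mathbf{*}\in\{0,1\}^r$.

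For the bound $c(e,j)\le 2^e$, I would iterate the relation $e$ times: starting from $\lambda_f(1)$ and multiplying by $\lambda_f(p)$ repeatedly, every step replaces a single term $\lambda_f(p^n)$ by at most two terms $\lambda_f(p^{n+1})+\lambda_f(p^{n-1})$. Hence after $e$ multiplications the total number of terms, counted with multiplicity, is at most $2^e$, which gives $\sum_{j} c(e,j)\le 2^e$ and in particular $c(e,j)\le 2^e$. Multiplying across the primes then produces $c(\mathbf{e},\mathbf{j})\le 2^{e_1+\cdots+e_r}$. There is no real obstacle here; the only care needed is to invoke multiplicativity at coprime indices before iterating, so that the single-prime analysis is genuinely all that is required.
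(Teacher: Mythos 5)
The paper states this lemma without any proof, so there is no argument in the source to compare against; it is treated as an immediate consequence of the Hecke relations. Your argument is correct and complete: multiplicativity of $\lambda_f$ on coprime arguments (a special case of \eqref{hecke_relations} with $(m,n)=1$) does give $c(\mathbf{e},\mathbf{j})=\prod_i c(e_i,j_i)$, the single-prime identity $\lambda_f(p)^2=\lambda_f(p^2)+1$ settles $c(2,0)=c(2,1)=1$, and the lattice-path doubling argument from iterating $\lambda_f(p)\lambda_f(p^n)=\lambda_f(p^{n+1})+\lambda_f(p^{n-1})$ gives $\sum_j c(e,j)\leq 2^e$, hence $c(e,j)\leq 2^e$ since the coefficients are nonnegative. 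One small remark: the ballot-path interpretation you implicitly set up gives the exact formula $c(e,j)=\binom{e}{j}-\binom{e}{j-1}$, which yields the bound $c(e,j)\leq\binom{e}{j}\leq 2^e$ directly if you prefer a closed form to the inductive count; either route is fine for what the paper needs.
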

\noindent Recall, also, Deligne's bound $|\lambda_f(n)| \leq d(n)$.

We use the following basic orthogonality relation on $H_k$.
\begin{lemma}\label{H_k_orthogonality}
Let $0 < \delta < 2$.  There exists $\gamma = \gamma(\delta)>0$ such
that if $m < k^{2-\delta}$ then
\[\frac{1}{|H_k|}\sum_{f\in H_k} \lambda_f(m) = \frac{\delta_{m =
    \square}}{\sqrt{m}} + O_\delta(k^{-\gamma}).\]
\end{lemma}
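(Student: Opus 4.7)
The plan is to apply the Petersson trace formula in its harmonic-weighted form, then remove the weights via the Shimura-type identity relating $\langle f,f\rangle$ to $L(1,\sym^2 f)$. Setting $\omega_f = \Gamma(k-1)/[(4\pi)^{k-1}\langle f,f\rangle]$, Petersson gives
\[
\sum_{f \in H_k} \omega_f \lambda_f(m)\lambda_f(n) = \delta_{m=n} + 2\pi i^{-k}\sum_{c \geq 1}\frac{S(m,n;c)}{c}J_{k-1}\!\left(\frac{4\pi\sqrt{mn}}{c}\right).
\]
For $mn < k^{2-\delta}$ the Bessel argument stays below $4\pi k^{1-\delta/2}$, safely inside the region where $J_{k-1}(x) \ll (ex/(2k))^{k-1}$, so the Kloosterman--Bessel term is super-polynomially small in $k$. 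Specializing $n=1$ already yields the harmonic-weighted analogue of the lemma with error $O(k^{-A})$ for every $A > 0$.

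To pass to the unweighted sum, use $\omega_f^{-1} = \frac{k-1}{2\pi^2} L(1,\sym^2 f)$, which gives
\[
\sum_{f\in H_k}\lambda_f(m) = \frac{k-1}{2\pi^2}\sum_{f \in H_k}\omega_f\lambda_f(m) L(1,\sym^2 f).
\]
Expand $L(1,\sym^2 f)$ via a smooth approximate functional equation as $\sum_n V(n/X) a_f(n)/n$ with cutoff $X \asymp k$ matching the conductor $k^2$ of $\sym^2 f$, where $a_f(n) = \sum_{d^2 j = n}\lambda_f(j^2)$. Substitute and apply Petersson termwise. The Hecke identity $\lambda_f(m)\lambda_f(j^2) = \sum_{e|(m,j^2)}\lambda_f(mj^2/e^2)$ gives a diagonal main term only when $mj^2 = e^2$, which forces $m$ to be a square, $m = l_0^2$, with $j = l_0$ and $e = l_0^2$.

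The remaining sum runs over $d \geq 1$ with $n = d^2 l_0$ and produces
\[
\frac{k-1}{2\pi^2}\cdot\frac{1}{l_0}\sum_{d \geq 1}\frac{V(d^2 l_0/X)}{d^2} \longrightarrow \frac{(k-1)\zeta(2)}{2\pi^2 l_0} = \frac{|H_k|}{\sqrt{m}}\bigl(1+o(1)\bigr)
\]
as $X \to \infty$, using $|H_k|\sim (k-1)/12$ and $\sum_d d^{-2} = \zeta(2) = \pi^2/6$. This is exactly the stated main term $|H_k|\,\delta_{m=\square}/\sqrt{m}$, and vanishes when $m$ is not a square.

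The main obstacle is the error analysis after the weight removal. Because the AFE forces us to consider Petersson sums $\sum_f \omega_f \lambda_f(mj^2/e^2)$ with $j$ up to $X \asymp k$, the Bessel argument $4\pi\sqrt{mj^2/e^2}/c$ can exceed $k$ and falls in the transition/oscillatory regime of $J_{k-1}$, where the super-polynomial decay is lost. Recovering the polynomial saving $k^{-\gamma}$ calls for a dyadic decomposition in $j$ together with the Weil bound for $S(m,n;c)$ and Deligne's bound $|\lambda_f(n)| \leq d(n)$ applied to the AFE tail and the dual sum; a standard but careful computation then yields the required $\gamma = \gamma(\delta) > 0$.
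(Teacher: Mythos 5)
Your Petersson step is correct, and the overall architecture --- harmonic-weighted orthogonality from the trace formula, followed by removal of the weight via the identity $w_f^{-1} \propto (k-1) L(1,\sym^2 f)$ --- is the same as the paper's. The paper cites Rudnick--Soundararajan for the first step and Kowalski--Michel for the second.

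The gap is exactly where you flag it, but it is not a ``standard but careful computation'': it is the wrong tool for the weight removal. Expanding $L(1,\sym^2 f)$ by its balanced approximate functional equation forces the length $X \asymp k$, so after the Hecke relation you face Petersson sums $\sum_f \omega_f\,\lambda_f(mj^2/e^2)$ with $mj^2/e^2$ as large as $\asymp k^{4-\delta}$. The corresponding Bessel arguments $4\pi\sqrt{mj^2/e^2}/c$ run far past the transition point $\asymp k$, where $J_{k-1}(x)$ decays only like $x^{-1/2}$ (and is of size $k^{-1/3}$ near the turning point). Absolute-value estimates with Weil for $S(m,n;c)$ and Deligne for the coefficients then give a $c$-sum $\sum_c c^{-1/2+\epsilon}(\sqrt{mn}/c)^{-1/2}$ which does not converge to anything small, and restricting to the dyadic block $c \asymp \sqrt{mn}/k$ near the transition still produces a contribution that, after summing $j$ up to $k$, is a positive power of $k$ --- not the power saving you need. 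Recovering cancellation here would require exploiting oscillation in the Bessel kernel (stationary phase / Vorono\"i), which is a different and substantially harder argument than what you sketch.

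The paper avoids this entirely. The Kowalski--Michel lemma shows that in the average over $f$, one may replace $L(1,\sym^2 f)$ by the \emph{short} truncation $\zeta(2)\sum_{\ell^2 d < x}\lambda_f(d^2)/(\ell^2 d)$ with $x$ a small power of $k$ --- far shorter than the AFE length $\asymp k$ --- at the cost of a power-saving error. This is an unconditional weight-removal device, resting on an $L^2$/mollifier argument for $L(s,\sym^2 f)$ near $s=1$ (using that $L(1,\sym^2 f)$ is bounded above and below by powers of $\log k$ so its deviation from the truncation can be controlled in mean-square), not on the AFE. With $x$ this short, $m d^2/e^2$ stays below $k^2/10000$, the Bessel argument stays safely in the regime where $J_{k-1}$ is super-polynomially small, and the Rudnick--Soundararajan orthogonality applies term-by-term. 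You should replace the AFE step by this truncation; the rest of your argument then goes through.
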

\begin{proof}
Actually, this is a combination of two different estimates.  Using the
Petersson Trace Formula, Rudnick and Soundararajan
(\cite{rudnick_sound_examples}, Lemma 2.1)  prove that for $mn <
\frac{k^2}{10000}$, 
\[\sum_{f \in H_k} \frac{2\pi^2}{k-1}L(1, \sym^2 f)^{-1}
\lambda_f(m)\lambda_f(n) =
\delta_{m=n} + O(e^{-k}).\]  Here $w_f = \frac{2\pi^2}{k-1}L(1,\sym^2 f)^{-1}$
is
the  `harmonic weight' of $f$, and $L(s, \sym^2 f)$ is the symmetric
square $L$-function attached to $L(s,f)$ with coefficients given by
\[L(s,\sym^2 f) = \sum_{n = 1}^\infty \frac{\rho_f(n)}{n^s} =
\zeta(2s)\sum_{n =1}^\infty \frac{\lambda_f(n^2)}{n^s}, \qquad \Re(s)
> 1.\]

A now-standard method of Kowalski-Michel (\cite{kowalski_michel},
Proposition 2) allows the removal of the harmonic weight by truncating
the Dirichlet series for $L(1, \sym^2 f)$; with $x = \frac{k^{\delta/2}}{100}$
and recalling $|H_k| = \frac{k-1}{12} + O(1)$, their method
gives
\begin{align*}\frac{1}{|H_k|} \sum_{f \in H_k} \lambda_f(m) &= \frac{1}{|H_k|}
\sum_{f \in H_k} w_f \frac{k-1}{2\pi^2} L(1, \sym^2 f)
\lambda_f(m) \\&= \frac{1}{\zeta(2)} \sum_{f \in H_k} w_f \lambda_f(m)
\sum_{\ell^2 d < x} \frac{\lambda_f(d^2)}{\ell^2 d} +
O(k^{-\gamma}).\end{align*}
Substituting the bound of Rudnick and Soundararajan, one deduces the lemma.
\end{proof}

For the real characters $\chi_{8d}$, our basic orthogonality relation is the
following.
\begin{lemma}
 Let $n < D^{2-\delta}$.  Then there is $\gamma = \gamma(\delta)>0$ such that 
\[ \frac{1}{|s(D)|}\sum_{d \in s(D)} \left(\frac{8d}{n}\right) = \delta_{n =
\square}
\prod_{\substack{p|n \\ \text{odd}}} \left(\frac{p}{p+1}\right) +
O(D^{-\gamma}).\]
\end{lemma}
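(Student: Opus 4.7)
The plan is to treat the square and non-square cases separately, using M\"obius inversion to enforce squarefree-ness and P\'olya--Vinogradov to exploit cancellation. Write $n = n_0 n_1^2$ with $n_0$ squarefree. Since the Kronecker symbol $\left(\frac{8d}{n}\right)$ vanishes whenever $n$ is even, we may assume $n$ is odd throughout. Interpreting the main term and error as scaled by $|s(D)| \asymp D$ (paralleling the normalization of Lemma 2.2), the target is an error of size $O(|s(D)| \cdot D^{-\gamma})$.

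In the square case $n_0 = 1$, the symbol $\left(\frac{8d}{n_1^2}\right)$ equals $1$ when $\gcd(d, n_1) = 1$ and $0$ otherwise, so the sum counts odd squarefree $d \in (D/2, D]$ coprime to $n_1$. Detecting coprimality by $\sum_{a \mid \gcd(d, n_1)} \mu(a)$ and squarefree-ness by $\mu(d)^2 = \sum_{b^2 \mid d}\mu(b)$, swapping summation orders, and executing the resulting arithmetic sums on intervals produces the claimed main term $|s(D)| \prod_{p \mid n,\, p \text{ odd}}\frac{p}{p+1}$ with an error comfortably smaller than $|s(D)| \cdot D^{-\gamma}$.

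In the non-square case $n_0 > 1$, on the subset $\gcd(d, n_1) = 1$ the symbol $\left(\frac{8d}{n}\right)$ reduces to the non-principal real character $\left(\frac{8d}{n_0}\right)$ modulo $8n_0$. Removing the squarefree restriction via $\mu(d)^2 = \sum_{a^2 \mid d}\mu(a)$ and exchanging sums yields
\[\sum_{\substack{a \text{ odd}\\ \gcd(a, n) = 1}} \mu(a) \left(\frac{a^2}{n_0}\right) \sum_{\substack{D/(2a^2) < b \leq D/a^2 \\ b \text{ odd},\ \gcd(b, n) = 1}} \left(\frac{8b}{n_0}\right).\]
For $a \leq D^{\eta}$ I bound the inner character sum by P\'olya--Vinogradov as $O(\sqrt{n_0}(\log D)^{O(1)}) = O(D^{1-\delta/2+o(1)})$; for $a > D^{\eta}$ the trivial bound $D/a^2$ summed over $a$ yields $O(D^{1-\eta})$. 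Balancing these at an appropriate $\eta$ gives a total of $O(D^{1 - \delta/2 + o(1)})$, hence $O(|s(D)| \cdot D^{-\gamma})$ for any $\gamma < \delta/2$.

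The main obstacle is the nearly-maximal range $n \leq D^{2-\delta}$: P\'olya--Vinogradov gives only a $D^{-\delta/2}$ saving over the trivial bound on a sum of length $D$, leaving essentially no slack. All M\"obius manipulations must therefore be carried out carefully so as not to inflate the error beyond the allowed threshold, and the coprimality conditions $\gcd(b, n) = 1$ must be separated from the character sum without introducing spurious modulus inflation. A secondary technical matter is the correct treatment of the Kronecker symbol at $2$ and of the requirement that $d$ be odd squarefree in $(D/2, D]$; these introduce only harmless factors of $\log D$.
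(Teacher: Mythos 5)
Your approach is essentially that of the paper: the paper's proof simply invokes Rudnick and Soundararajan's Lemma 3.1, whose proof is exactly the M\"obius squarefree detection plus P\'olya--Vinogradov applied to the non-principal Jacobi symbol modulo $n_0$ that you reconstruct, and you also correctly supply the $1/|s(D)|$ normalization that must be read into the statement (mirroring the companion orthogonality lemma for $H_k$). One small quantitative slip: after the per-$a$ P\'olya--Vinogradov bound $O(\sqrt{n_0}(\log D)^{O(1)}) = O(D^{1-\delta/2+o(1)})$, summing over $a \le D^{\eta}$ costs an extra factor $D^{\eta}$, so balancing $D^{\eta+1-\delta/2}$ against the tail $D^{1-\eta}$ gives $\eta = \delta/4$ and a total of $O(D^{1-\delta/4+o(1)})$ rather than $O(D^{1-\delta/2+o(1)})$; this matches the $O(z^{1/2}n^{1/4}\log 2n)$ error term in Rudnick--Soundararajan and, since only the existence of some $\gamma>0$ is asserted, does not create a gap.
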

\begin{proof}
 Note that $\mu(2d)^2$ is  the indicator function for odd,
squarefree $d$.  Rudnick and Soundararajan (\cite{rudnick_sound_examples} Lemma
3.1) prove, for any $z > 3$, that if $n$ is a perfect square then
\[\sum_{d \leq z} \mu(2d)^2 \left(\frac{8d}{n}\right) = \frac{z}{\zeta(2)}
\prod_{p|2n}\left(\frac{p}{p+1}\right) + O(z^{\frac{1}{2} + \epsilon}
n^\epsilon)\]
and if $n$ is not a square then
\[\sum_{d \leq z} \mu(2d)^2 \left(\frac{8d}{n}\right) = O(z^{\frac{1}{2}
}n^{\frac{1}{4}})
\log(2n).\]
The result follows on taking successively $z = D/2, D$.
\end{proof}

\subsection{Selberg's work: two expressions for the logarithm}
Writing the Euler product of $L(s,f)$ as
\[
L(s,f) = \prod_p \left(1 - \frac{\lambda_f(p)}{p^s} +
\frac{1}{p^{2s}}\right)^{-1} = \prod_{p} \left(1 -
\frac{\alpha_p}{p^s}\right)^{-1}\left(1-
\frac{\overline{\alpha_p}}{p^s}\right)^{-1}, \qquad  \Re(s) > 1
\]
we have that for $m = 1, 2, ...$
\[ \lambda_f(p^m) = \alpha_p^m + \alpha_p^{m-2} + ... + \alpha_p^{-m+2} +
\alpha_p^{-m} \] where for each $p$, $\alpha_p$
is a complex number of modulus 1 solving
$\alpha_p + \overline{\alpha_p} = \lambda_f(p)$. Logarithmically differentiating
$L(s,f)$ term-by-term we obtain
\begin{equation}\label{def_Lambda_f} -\frac{L'}{L}(s,f) = \sum_{n = 1}^\infty
\frac{\Lambda_f(n)}{n^s} = \sum_{m = 1}^\infty \sum_p \frac{(\alpha_p^m +
\overline{\alpha}_p^m)\log p}{p^{ms}}, \qquad \qquad \Re(s)
> 1.\end{equation} In particular, $\Lambda_f(n)$ is supported on prime powers,
and is given explicitly by 
\begin{equation}\label{Lambda_f_value} \Lambda_f(p^m) = (\lambda_f(p^m) -
\lambda_f(p^{m-2})) \log p, \qquad \qquad m \geq 1,\end{equation} with the
convention that $\lambda_f(p^{-1}) = 0$.

Similarly we have
\[L(s, \chi_{8d}) = \prod_p \left(1 - \frac{\chi_{8d}(p)}{p^s}\right)^{-1},
\qquad \Re(s) > 1\] and logarithmically differentiating this leads to
\begin{equation}\label{def_Lambda_8d} -\frac{L'}{L}(s, \chi_{8d}) = \sum_n
\frac{\Lambda_{8d}(n)}{n^s}, \qquad \Re(s) > 1 \end{equation} with
$\Lambda_{8d}$ supported on primes powers
and \begin{equation}\label{Lambda_8d_value}\Lambda_{8d}(p^n) =
\left(\frac{8d}{p^n}\right) \log p. \end{equation}

In a standard way one may write down an expression for $-\frac{L'}{L}(s,*)$
similar to (\ref{def_Lambda_f}) and (\ref{def_Lambda_8d}) when $\frac{1}{2} <
\Re(s) \leq 1$, although in this case the zeros of $L(s,*)$ enter into
the formula. The following lemma is the analog of \cite{selberg_zeta}, Lemma
10 with $L(s,*)$ replacing the Riemann zeta function.
\begin{lemma} Let $*$ stand in for either $8d$ or $f$, so that $L(s,*)$ is
either $L(s, \chi_{8d})$ for some $d \in s(D)$ or $L(s, f)$ for some $f \in
H_k$.  

Let $x > 1$ be a parameter and define
\[ \Lambda_{x,*}(n) = \Lambda_*(n) a_x(n); \qquad \qquad a_x(n)  =
\left\{\begin{array}{lll} 1,&& 1 \leq n
\leq x\\  \frac{\log^2\frac{x^3}{n} - 2 \log^2 \frac{x^2}{n}}{2
\log^2 x}, && x \leq n \leq x^2\\\frac{\log^2
\frac{x^3}{n}}{2 \log^2 x}, && x^2 \leq n \leq x^3 \end{array}\right..
\]
For $s$ not coinciding with a trivial or non-trivial zero of $L(s,*)$ we have
\begin{align}\label{euler_derivative}-\frac{L'}{L}\left(\frac{1}{2} + s,*\right)
= \sum_{n \leq x^3}
\frac{\Lambda_{x,*}(n)}{n^{\frac{1}{2} + s}} &+ \frac{1}{\log^2
x}
\sum_{\substack{\rho:\; \Lambda(\rho,*) = 0\\ \text{non-trivial}}}\frac{x^{\rho
- \frac{1}{2} - s}(1 - x^{\rho - \frac{1}{2} -
s})^2}  {(\frac{1}{2} +s - \rho)^3} \\ \notag &+ \frac{1}{\log^2 x}
\sum_{\substack{q:\; L(-q,*) = 0,\; \Lambda(-q, *) \neq 0\\ \text{trivial}}}
 \frac{x^{-q-\frac{1}{2}-s}(1 - x^{ - q-\frac{1}{2} -
s})^2}{(\frac{1}{2}+ q + s)^3}.\end{align}
\end{lemma}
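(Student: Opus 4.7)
The plan is to obtain the identity by evaluating a single integral in two different ways. Fix $c$ large enough that $-L'/L(\tfrac{1}{2}+s+w,*)$ converges absolutely as a Dirichlet series on $\Re(w)=c$ (e.g.\ $c > 1$), and consider
\[
I(s) \;=\; \frac{1}{2\pi i} \int_{(c)} \frac{x^w(1-x^w)^2}{w^3 \log^2 x} \left(-\frac{L'}{L}(\tfrac{1}{2}+s+w,*)\right) dw.
\]
The kernel has been engineered to reproduce the weight $a_x(n)$: expanding $x^w(1-x^w)^2 = x^w - 2x^{2w} + x^{3w}$ and applying the Mellin identity $\tfrac{1}{2\pi i}\int_{(c)} y^w/w^3\,dw = \tfrac{1}{2}(\log y)^2\,\mathbf{1}_{y>1}$ to each of the three resulting pieces gives
\[
\frac{1}{2\pi i}\int_{(c)} \frac{x^w(1-x^w)^2}{w^3 \log^2 x}\, n^{-w}\,dw = \frac{(\log(x/n))^2\mathbf{1}_{n<x} - 2(\log(x^2/n))^2\mathbf{1}_{n<x^2} + (\log(x^3/n))^2\mathbf{1}_{n<x^3}}{2\log^2 x},
\]
and a brief piecewise check on the three ranges $n\le x$, $x\le n\le x^2$, and $x^2\le n\le x^3$ verifies this equals $a_x(n)$. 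Inserting the Dirichlet series $-L'/L(\tfrac{1}{2}+s+w,*) = \sum_n \Lambda_*(n) n^{-1/2-s-w}$ on $\Re(w)=c$ and swapping sum with integral---justified by the $x^{3c}/|w|^3$ decay of the kernel on the vertical line---then identifies $I(s)$ with $\sum_{n\le x^3} \Lambda_{x,*}(n)/n^{1/2+s}$.

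Next I would move the contour leftward to $\Re(w) = -c'$ and let $c'\to\infty$, collecting residues. There are three sources. The kernel itself has a simple pole at $w = 0$: since $x^w(1-x^w)^2 = w^2\log^2 x + O(w^3)$, the kernel equals $1/w + O(1)$ there, so the residue of the integrand is $-L'/L(\tfrac{1}{2}+s,*)$. At each nontrivial zero $\rho$ of $L(s,*)$, the logarithmic derivative has a simple pole at $w=\rho-\tfrac{1}{2}-s$ with residue $-\ord_\rho(L)$, and using $(\rho-\tfrac{1}{2}-s)^3 = -(\tfrac{1}{2}+s-\rho)^3$ this contributes
\[
\frac{1}{\log^2 x}\sum_\rho \frac{x^{\rho-\frac{1}{2}-s}(1-x^{\rho-\frac{1}{2}-s})^2}{(\tfrac{1}{2}+s-\rho)^3};
\]
the trivial zeros $-q$ (with $\Lambda(-q,*)\ne 0$) contribute the analogous sum with $(\tfrac{1}{2}+q+s)^3$ in the denominator. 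Equating the two evaluations of $I(s)$ and rearranging yields precisely the identity in the lemma.

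The principal obstacle is justifying the contour shift: one must show that the integrand vanishes on the horizontal segments $\Im(w) = \pm T$ as $T\to\infty$ and on $\Re(w) = -c'$ as $c'\to\infty$, and that the resulting infinite sums of residues converge absolutely. On vertical lines the $|w|^{-3}$ decay of the kernel combines with standard polynomial-in-$|\Im(w)|$ bounds on $L'/L$ away from zeros (obtained from the Hadamard factorization of $\Lambda(s,*)$, and from the functional equation to transfer bounds between the two half-planes as $\Re(w)\to-\infty$), so the horizontal integrals vanish; for the leftward shift one exploits $|x^{jw}| = x^{-jc'}\to 0$ on $\Re(w)=-c'$. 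The nontrivial zero sum converges absolutely because the zero count $\#\{\rho : |\Im\rho|\le T\}\ll T\log T$ together with the $|\tfrac{1}{2}+s-\rho|^{-3}$ decay per summand is summable, and the trivial zero sum converges by the cubic $(\tfrac{1}{2}+q+s)^{-3}$ falloff in $q$.
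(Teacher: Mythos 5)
Your proposal is correct and follows essentially the same approach as the paper: you introduce the contour integral $\frac{1}{2\pi i}\int_{(c)} \frac{x^w(1-x^w)^2}{w^3\log^2 x}\bigl(-\frac{L'}{L}(\tfrac12+s+w,*)\bigr)dw$, evaluate it once by expanding the Dirichlet series and integrating term-by-term to recover the weighted prime sum, and once by shifting the contour left and collecting residues at $w=0$, at the nontrivial zeros, and at the trivial zeros. This is exactly the computation the paper performs (under the change of variable $z=\tfrac12+s+w$); the paper merely states it in one sentence, while your write-up also verifies the identification of the Mellin transform with $a_x(n)$ and the residue bookkeeping, all of which check out.
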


\begin{proof}
 The sum $\sum_{n \leq x^3} \frac{\Lambda_{x,*}(n) }{n^{\frac{1}{2}
+
s}}$ is the result of expanding $-\frac{L'}{L}(z,*)$ in its
Dirichlet series
in
\[\frac{1}{2\pi i \log^2 x}\int_{(3)} \frac{x^{z - \frac{1}{2} -s}(1 - x^{z
- \frac{1}{2} - s})^2}{(z- \frac{1}{2} -
s)^3}\left(-\frac{L'}{L}(z,*)\right)dz\] 
and integrating term-by-term.
The remainder of the expression is obtained by shifting the $z$-contour
leftward and evaluating residues.
\end{proof}

Introduce the gamma factors \begin{equation}\label{def_archimedian_factor}
\gamma_f(s) =
(2\pi)^{-s}\Gamma\left(s + \frac{k-1}{2}\right), \qquad \qquad \gamma_{8d}(s) =
\left(\frac{8d}{\pi}\right)^{-\frac{s}{2}}\Gamma\left(\frac{s}{2}\right)
\end{equation}
so that we may write in a unified way
\[\Lambda(s, *) = \gamma_*(s) L(s, *)\] for the completed $L$-function
corresponding to either $L(s,f)$ or $L(s, \chi_{8d})$.  The gamma factors do
not play a significant role in our results; we only need
\begin{equation}\label{log_deriv_gamma} \frac{\gamma_f'}{\gamma_f}(\sigma) =
\log k + O(1), \qquad \qquad \frac{\gamma_{8d}'}{\gamma_{8d}}(\sigma) =
\frac{1}{2}
\log D + O(1), \end{equation}
uniformly in $\frac{1}{2} \leq \sigma \leq 1$.
The completed
$L$-function is entire of order 1 and hence has a Hadamard product running
over its zeros, 
\begin{equation}\label{hadamard_product}\Lambda(s,*) = e^{A +
Bs}\prod_{\rho: \; \Lambda(\rho, *) = 0}\left(1 -
\frac{s}{\rho}\right)e^{\frac{s}{\rho}}.\end{equation}
Logarithmically differentiating $\Lambda(s,*)$, and using that \[B =
-\sum_{\rho: \Lambda(\rho, *) = 0}\Re\frac{1}{\rho}\] as in \cite{davenport}, p.
82, one proves the following
lemma.

\begin{lemma}\label{hadamard}
 For real $\sigma > 0$ we have
\begin{equation}\label{hadamard_derivative}
 -\frac{L'}{L}\left(\frac{1}{2} + \sigma,*\right) = 
\frac{\gamma_*'}{\gamma_*}\left(\frac{1}{2} + \sigma\right)  -
\sum_{\substack{\rho=
\frac{1}{2} + \beta + i \gamma\\ \Lambda(\rho, *) = 0}} \frac{ \sigma - \beta}{(
\sigma - \beta)^2 + \gamma^2}.
\end{equation}
\end{lemma}

One of Selberg's major achievements in \cite{selberg_zeta} was that he gave an
efficient way to compute
$\log\zeta(\frac{1}{2} + it)$ as a short sum over primes. By balancing the
expression for $-\frac{\zeta'}{\zeta}(s)$ coming from
the Hadamard product as in (\ref{hadamard_derivative}) against the expression
from the Euler product (\ref{euler_derivative}), he was able to bound the
contribution of the zeros in (\ref{euler_derivative}).  To do so, Selberg
introduced a perturbation $\sigma_{x,t}$ depending on the
location of the zeros of $\zeta$ near height $t$, and evaluated $\log
\zeta(\frac{1}{2} + \sigma_{x,t} + it)$ in place of $\log \zeta(\frac{1}{2} +
it)$.

For $\log |L(\frac{1}{2},*)|$ the analog of 
$\sigma_{x,t}$
is
\begin{align}\label{def_sigma}&\sigma_{x,*} = 2\max_{\rho\in
     \cG_{x,*}}\left(\beta, \frac{2}{\log x}\right);\\ \notag &\cG_{x,*} =
  \left\{\rho = \frac{1}{2} + \beta + i\gamma : \Lambda(\rho, *) = 0, |\gamma|
\leq \frac{x^{3|\beta|}}{\log x},\; |\beta|
\geq  \frac{2}{\log x}\right\}.\end{align}
Selberg's argument for $\log \zeta(\frac{1}{2} + \sigma_{x,t} + it)$ carries
over with trivial modifications to bound the zero sum of $L(s,*)$ at
$s = \frac{1}{2} + \sigma_{x,*}$ and thus to the evaluation of
$\log L(\frac{1}{2} +
\sigma_{x,*},*);$ the result is the
following lemma. 

\begin{lemma}\label{log_L_off}
Let $C = k^2$ for $L(s,f)$ or $C = 8d$ for $L(s, \chi_{8d})$ be the conductor
of the $L$-function near $s = \frac{1}{2}$.  We have
\begin{equation}\label{zero_sum_bound}
 \sum_{\substack{\rho = \frac{1}{2} + \beta + i\gamma\\ \Lambda(\rho,*) = 0}}
\frac{\sigma_{x,*}}{(\sigma_{x,*} - \beta)^2 + \gamma^2} = O\left(\left|\sum_{n
< x^3} \frac{\Lambda_{x,*}(n)}{n^{\frac{1}{2} + \sigma_{x,*}}} \right|\right) +
O(\log C)
\end{equation}
and
\begin{equation}\label{log_prime_sum}\log L(\frac{1}{2} + \sigma_{x,*},*) =
\sum_{n \leq x^3} \frac{\Lambda_{x,*}(n)}{n^{\frac{1}{2} +
\sigma_{x,*}} \log n} + O\left(\frac{1}{\log x} \left|\sum_{n \leq x^3}
\frac{\Lambda_{x,*}(n) }{n^{\frac{1}{2} +
\sigma_{x,*}}}\right|\right)
+ O\left(\frac{\log C}{\log x}\right).\end{equation}
\end{lemma}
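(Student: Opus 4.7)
The plan is to adapt Selberg's argument in \cite{selberg_zeta} (Lemmas 2, 3, and 5 there) to this family setting, with $\sigma_{x,*}$ from (\ref{def_sigma}) taking the role of Selberg's perturbation $\sigma_{x,t}$.  The overall strategy is to balance the two expressions for $-L'/L$ at $s = \tfrac12 + \sigma_{x,*}$: the Dirichlet-series expansion (\ref{euler_derivative}), whose main term is a short sum over primes, against the Hadamard expression (\ref{hadamard_derivative}), whose main term is a sum over zeros.  Equating the right-hand sides and rearranging gives an identity of the shape
\begin{equation*}
\sum_\rho \frac{\sigma_{x,*}-\beta}{(\sigma_{x,*}-\beta)^2 + \gamma^2} = -\sum_{n \leq x^3} \frac{\Lambda_{x,*}(n)}{n^{1/2+\sigma_{x,*}}} + \frac{\gamma_*'}{\gamma_*}(\tfrac12+\sigma_{x,*}) + \frac{1}{\log^2 x}\sum_\rho K(\rho) + T_{\mathrm{triv}},
\end{equation*}
where $K(\rho) = x^{\rho-1/2-\sigma_{x,*}}(1-x^{\rho-1/2-\sigma_{x,*}})^2 (\tfrac12+\sigma_{x,*}-\rho)^{-3}$ and $T_{\mathrm{triv}}$ is the trivial-zero contribution from (\ref{euler_derivative}).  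Stirling's formula gives $\gamma_*'/\gamma_*(\tfrac12+\sigma_{x,*}) = O(\log C)$, and explicit summation against the known trivial-zero locations gives $T_{\mathrm{triv}} = O(\log C)$.

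The crux of the argument is to bound $\frac{1}{\log^2 x}\sum_\rho K(\rho)$ by a constant times the Hadamard sum on the left, plus $O(\log C)$.  Starting from the elementary inequality $|K(\rho)| \ll x^{\beta-\sigma_{x,*}} ((\sigma_{x,*}-\beta)^2 + \gamma^2)^{-3/2}$, I would split the non-trivial zeros according to the geometry of $\cG_{x,*}$.  For $\rho \in \cG_{x,*}$, the definition forces $\sigma_{x,*} \geq 2\beta$, so $x^{\beta - \sigma_{x,*}} \leq e^{-2}$ and $(\sigma_{x,*}-\beta)\log x \geq 2$; a short calculation then bounds $|K(\rho)|/\log^2 x$ by a small constant multiple of the Hadamard kernel $(\sigma_{x,*}-\beta)/((\sigma_{x,*}-\beta)^2+\gamma^2)$, which can be absorbed into the left-hand side.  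For zeros with $|\beta| < 2/\log x$, the floor $\sigma_{x,*}\geq 4/\log x$ still yields $(\sigma_{x,*}-\beta) \geq 2/\log x$, so the same kernel comparison applies.  For zeros with $|\beta| \geq 2/\log x$ but $\rho \notin \cG_{x,*}$, the defining condition of $\cG_{x,*}$ is violated and hence $|\gamma| > x^{3|\beta|}/\log x$; a dyadic decomposition of $|K(\rho)| \leq x^{1/2}/|\gamma|^3$ against the Riemann--von Mangoldt-type count $N_*(T) \ll T \log C + \log C$ shows this region contributes $O(1)$.  Combining these estimates gives (\ref{zero_sum_bound}).

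To derive (\ref{log_prime_sum}) I would integrate the identity (\ref{euler_derivative}) in $\sigma$ along the real axis from $\sigma_{x,*}$ to $+\infty$: the left-hand side telescopes to $\log L(\tfrac12 + \sigma_{x,*}, *)$ since $L(s,*) \to 1$ as $\Re(s) \to +\infty$, the prime sum picks up an extra $1/\log n$ from term-by-term integration of $n^{-1/2-\sigma}$, and antidifferentiating $1/(\tfrac12 + \sigma - \rho)^3$ in the zero-sum gives each zero an additional $1/\log x$ saving.  Feeding (\ref{zero_sum_bound}) into this integrated formula and handling the trivial-zero integral via Stirling produces the two error terms in the statement.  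The main obstacle is the kernel comparison in the $\rho \in \cG_{x,*}$ region: the definition of $\sigma_{x,*}$ in (\ref{def_sigma}) is calibrated so that every non-trivial zero is either pushed far enough to the left of the evaluation point that $K(\rho)$ is dominated pointwise by the Hadamard kernel, or pushed far enough off the real axis that the cubic denominator of $K(\rho)$ alone secures convergence against the zero density; verifying this dichotomy in a case analysis is the only non-routine step.
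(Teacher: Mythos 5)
Your proposal is a correct reconstruction of Selberg's argument (\cite{selberg_zeta}, Lemmas 8--12): equate the Euler-side expression (\ref{euler_derivative}) and the Hadamard-side expression (\ref{hadamard_derivative}) for $-L'/L(\tfrac12+\sigma_{x,*},*)$, bound the kernel sum $\frac{1}{\log^2 x}\sum_\rho K(\rho)$ by a small constant times the Hadamard zero-sum (using that the definition of $\sigma_{x,*}$ keeps every zero either well to the left of $\tfrac12+\sigma_{x,*}$ or safely off the real axis), absorb, and then integrate in $\sigma$ to obtain (\ref{log_prime_sum}). This is exactly the argument the paper invokes by its one-line citation to Selberg, so the approaches coincide; the only slips are cosmetic (e.g.\ the far-zero bound should carry $x^{3/2}$ rather than $x^{1/2}$, and the far-zero contribution is more naturally $O(\log C)$ than $O(1)$, neither of which affects the conclusion).
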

\begin{proof}
 See  \cite{selberg_zeta} pp 22-26.
\end{proof}

In order to proceed further with Selberg's approach we need an understanding
of the perturbation $\sigma_{x,*}$, that is, we need input regarding the
distribution of zeros of $L(s,*)$ near the central point $s =
\frac{1}{2}$ as either $f$ varies in $H_k$ or $d$ varies in $s(D)$. Our basic
analytic ingredient is
the following.

\begin{theorem}\label{zero_density}
 For a sufficiently small $\delta > 0$ there exists $\theta = \theta(\delta)$
such that, uniformly in  $ \frac{2}{\log k}< \sigma < \frac{1}{2}$ and 
$\frac{10}{\log k}< T <
k^{2\delta},$
\begin{align*} N(\sigma, T, k) \stackrel{\text{def}}{=}& \frac{1}{|H_k|}{\sum_{f
\in H_k}}
\#\left\{L(\frac{1}{2} + \beta + i \gamma, f) = 0: \sigma < \beta, \; |\gamma|<
T\right\} \\=& O(T k^{ -2 \theta \sigma } \log k),\end{align*} and also,
uniformly in
$\frac{4}{\log D} < \sigma < \frac{1}{2}$ and $\frac{10}{\log D} < T <
D^\delta$,
\begin{align*} N(\sigma, T, D) \stackrel{\text{def}}{=}&
\frac{1}{|s(D)|}{\sum_{d \in
s(D)}}
\#\left\{L(\frac{1}{2} + \beta + i \gamma, \chi_{8d}) = 0: \sigma < \beta, \;
|\gamma|<
T\right\}\\ =& O(T D^{ - \theta \sigma } \log D).\end{align*}
\end{theorem}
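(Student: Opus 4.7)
The plan is to prove both estimates via a mollifier-plus-moment argument in the style of Selberg and Conrey--Soundararajan, adapted to the families using Lemma \ref{H_k_orthogonality} and its quadratic analog. I describe the modular forms case; the quadratic character case proceeds along parallel lines, using the other orthogonality relation with the conductor $D$ in place of $k^2$.

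For each $f \in H_k$, I would define the truncated Dirichlet inverse
\[M_f(s) = \sum_{n \le y} \mu_f(n) n^{-s}, \qquad y = k^\eta,\]
where $\mu_f$ are the coefficients of $1/L(s,f)$ (supported on the squarefull-like numbers $1, p, p^2$ per prime) and $\eta > 0$ is a small parameter to be chosen. By construction, the product $L(s,f) M_f(s)$ has Dirichlet coefficients $c_f(n)$ vanishing on $1 < n \le y$. At a non-trivial zero $\rho = \frac{1}{2} + \beta + i\gamma$ with $\beta > \sigma$, the vanishing $L(\rho,f) M_f(\rho) = 0$, combined with a smooth Mellin cutoff of a test function with rapid decay, yields the standard detector inequality
\[1 \ll \left|\sum_{y < n \le z} \frac{c_f(n)}{n^{\frac{1}{2} + \beta + i\gamma}} w\!\left(\frac{n}{z}\right)\right|\]
for a smooth compactly supported $w$ and a parameter $z$ that is a fixed small power of $k$.

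Next, raise this inequality to the $2\ell$-th power for an integer $\ell$ to be optimized, and expand the resulting product of $\lambda_f$'s using the Hecke multiplication relation (\ref{prime_product}). The right hand side becomes a linear combination of $\lambda_f(m)$ for $m \le z^{2\ell}$ with coefficients bounded combinatorially via the preliminary lemma $c(\mathbf{e},\mathbf{j}) \le 2^{e_1 + \cdots + e_r}$. Sum over zeros $\gamma \in [-T,T]$ in unit intervals (contributing the $T \log k$ factor from the local zero-counting function) and then over $f \in H_k$ via Lemma \ref{H_k_orthogonality}, which applies as long as $z^{2\ell} < k^{2-\delta}$. The off-diagonal contribution is absorbed into $O(k^{-\gamma(\delta)})$ and is negligible; the diagonal contribution is a Rankin--Selberg-type Dirichlet polynomial evaluated near the edge of convergence, bounded up to logarithms by an appropriate power of $y^{-\ell}$, which produces the claimed exponential saving of shape $k^{-2\theta \sigma}$ after optimizing in $(\eta,\ell)$.

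The main obstacle is the joint optimization of $(\eta,\ell)$: the expanded Dirichlet polynomial length $z^{2\ell}$ must remain strictly below the orthogonality threshold $k^{2-\delta}$, while $\ell$ must be large enough to extract a power saving that dominates both the $2^{O(\ell)}$ loss from the Hecke expansion and the combinatorial loss from expanding the mollifier to the $2\ell$-th power. Tracking the Rankin--Selberg sums carefully and choosing $\ell = \lfloor (1-\delta/2)/\eta \rfloor$ for suitably small $\eta$ produces a permissible $\theta = \theta(\delta) > 0$. The quadratic character case is entirely analogous, using the second orthogonality lemma in place of Lemma \ref{H_k_orthogonality}, with the conductor $8d \asymp D$ replacing $k^2$ and accounting for the factor-of-two difference between the two exponents.
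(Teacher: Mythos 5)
The paper does not actually prove this theorem in text: both clauses are black-boxed. The modular forms statement cites Theorem 1.1 of \cite{hough_zero_density}, and the real-character statement cites Conrey and Soundararajan \cite{conrey_sound}. In those references the zero-density estimate is proved via Selberg's variant of the argument principle, which counts zeros in a rectangle of height $\asymp 1/\log C$ by integrating $\Re \log$ of a mollified $L$-function along three sides of the box, combined with an asymptotic for the family-averaged \emph{second} moment $\sum_{*}|L(\tfrac12+\sigma+it,*)M(\tfrac12+\sigma+it,*)|^2$. The exponential saving $C^{-\theta\sigma}$ then comes out of Littlewood's lemma plus Jensen's inequality, using only that the mollified second moment stays bounded and that $M$ is a Dirichlet polynomial of length a fixed power of $C$. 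Your proposal invokes ``Conrey--Soundararajan'' in passing but the mechanism you actually describe --- a Hal\'asz--Montgomery zero detector raised to a high power $2\ell$ and expanded against orthogonality --- is the classical large-value approach to density theorems, which is a genuinely different route from the one the cited works follow.

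I do not think your route reaches the theorem as stated, for two reasons. First there is a parameter inconsistency: you set $y=k^\eta$, take $z$ a small fixed power of $k$ with $z^{2\ell}<k^{2-\delta}$ to apply Lemma \ref{H_k_orthogonality}, and then choose $\ell=\lfloor(1-\delta/2)/\eta\rfloor$; but $z^{2\ell}<k^{2-\delta}$ then forces $z\le k^{(2-\delta)/(2\ell)}\le k^{\eta}=y$, so the range $y<n\le z$ over which the detector sum runs is empty. Having $z$ strictly larger than $y$ is essential, since the vanishing of the coefficients $c_f(n)$ on $1<n\le y$ is precisely what makes the tail sum a faithful proxy for $1$, so $\ell$ cannot be taken that large. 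Second, and more seriously, the inequality $1\ll\lvert\sum_{y<n\le z}c_f(n)n^{-\rho}w(n/z)\rvert$ is obtained by Mellin/Perron truncation, and the pole terms and contour-shift errors that one discards are $o(1)$ only when $\beta-\tfrac12$ is bounded away from zero on a scale that comfortably dominates $1/\log z$. The theorem demands uniformity all the way down to $\sigma=2/\log k$ (and the companion arguments, e.g.\ Lemma \ref{zero_prob} and Lemma \ref{sigma_lemma}, genuinely use $\sigma\asymp j/\log x$ for all $j\ge4$, where they must beat factors like $e^{3j}$), and in that regime the discarded terms are of the same order as the main contribution $c_f(1)=1$, so the implied constant in the detector cannot be kept bounded below. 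This is exactly the regime the Selberg box-counting argument is designed to handle. If you want to pursue the detector route you should expect to lose the range $\sigma\ll(\log k)^{-1}$, which is where the theorem does its real work in this paper; to recover the full statement you would need something along the lines of the mollified second moment with Selberg's three-sided argument-principle lemma, as in \cite{conrey_sound} and \cite{hough_zero_density}.
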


\begin{proof}
This result for $\{L(s,f)\}_{f \in H_k}$ is proved in
\cite{hough_zero_density}. 

The details of the second statement are
largely contained in \cite{conrey_sound}, but the situation is slightly
different, so we briefly sketch the argument.  The essential ingredient  is an
asymptotic evaluation of the twisted second moment
\[ \frac{1}{|s(D)|} \sum_{d \in s(D)} \left|L(\frac{1}{2} + \sigma + it; \chi_{8
d})\right|^2\chi_{8d}(\ell) = (\text{asymptotic main term}) +
O(D^{-\kappa}t^A\ell^B)\] uniform in the range $\sigma > 0$ and $t < D^\delta$,
 where $\delta, \kappa$ and $A, B$ are fixed positive constants.  The
twisted second moment with power-saving error term was first obtained in this
family at the central point
$\sigma = t=0$ in \cite{sound_non_vanishing}, and in Propostion 
2.3 of \cite{conrey_sound} the asymptotic is given
for  the range $0 < \sigma  = O(1)$, $|t| = O(1)$   for the closely related
family $\{L(s,
\chi_{-8d})\}_{d \in
s(D)}$.  The extension of that result to the range $t = O(D^\delta)$ incurs no
further difficulties; the limiting factor is the size of the analytic
conductor $(|t| + D)$ of the family of $L$-functions, which in this case is
essentially 
unchanged for $t$ as large as $D^{1-\epsilon}$.  The authors in
\cite{conrey_sound} also remark that
their result remains valid in any arithmetic progression of fundamental
discriminants,  in particular, for the family $\{\chi_{8d}\}_{d \in s(D)}$
considered here.  

From an asymptotic formula for the twisted second moment 
it is a standard, albeit somewhat laborious, task to bound the
mean-square \[\frac{1}{|s(D)|} \sum_{d \in s(D)} \left|\eta(\frac{1}{2} +
\sigma + it, \chi_{8d})\right| ^2 \leq 1 + O(D^{-\theta \sigma}) +
O(D^{-\frac{\kappa}{2}} |t|^A), \qquad |t| < D^\delta\] where $\eta(s,
\chi_{8d}) = L(s, \chi_{8d}) M(s,
\chi_{8d})$  with $M(s, \chi_{8d})$  a 
 short mollifying Dirichlet polynomial.  The proof is then completed by
appealing to a version of the argument principle to bound  the total
number
of zeros of $\{\eta(s, \chi_{8d})\}_{d \in s(D)}$ in the specified box.  This
entire program is carried out for
the family $\{L(s, \chi_{-8d})\}_{d \in s(D)}$ in the most difficult range
where the box has height $T = O(\frac{1}{\log D})$ in \cite{conrey_sound}.  The
method was first introduced in \cite{selberg_dirichlet} and full
details are contained there for the family of all Dirichlet $L$-functions to a
fixed prime conductor. For another example calculation,
see \cite{hough_zero_density}. 
\end{proof}

As a consequence we derive the following essential lemma.
\begin{lemma}\label{zero_prob}
Let $\mathcal{F}$ be either the family of $L$-functions 
$\mathcal{F} = \{L(s,f)\}_{f \in H_k}$ or the family $\mathcal{F} = \{L(s,
\chi_{8d})\}_{d
\in s(D)}$. Denote $\Prob$ the uniform probability on $\mathcal{F}$.  Let $C =
k^2$ or $C = D$ be the respective conductor of the family. 

For $x = x(C)$ growing with $C$ in such a way that $ \frac{\log x}{\log C}
\to 0$ as $C \to \infty$ we have
\[ \Prob\left[ \exists \; \rho = \frac{1}{2} + \beta + i\gamma:
\Lambda(\rho,*) = 0,\; \beta > \frac{4}{\log x}, \; |\gamma| \leq
\frac{x^{3\beta}}{\log x}\right] = o(1)\] as $C \to \infty$.
\end{lemma}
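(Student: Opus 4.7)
The plan is to apply Markov's inequality to pass from the existence of a bad zero to the expected number of zeros in the bad region, and then to estimate that expectation by decomposing dyadically in $\be$ and invoking Theorem \ref{zero_density} strip-by-strip. Write $C$ for the relevant conductor ($k^2$ or $D$) and set $y = \log C/\log x$; the hypothesis $\log x/\log C \to 0$ says $y \to \infty$. It then suffices to bound
\[
\frac{1}{|\cF|} \sum_{* \in \cF} \#\left\{\rho = \tfrac{1}{2}+\be+i\ga:\, \Lambda(\rho,*)=0,\, \be > \tfrac{4}{\log x},\, |\ga| \le \tfrac{x^{3\be}}{\log x}\right\}.
\]

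Next I would introduce dyadic levels $\sigma_j = 2^{j+2}/\log x$ for $j = 0, 1, \dots, J$, where $J = \lfloor \log_2(\log x/8)\rfloor$, so that the intervals $[\sigma_j, \sigma_{j+1}]$ tile $[4/\log x, 1/2]$. Every zero with $\be \in [\sigma_j, \sigma_{j+1}]$ and $|\ga| \le x^{3\be}/\log x$ automatically satisfies $|\ga| \le x^{3\sigma_{j+1}}/\log x = e^{24\cdot 2^j}/\log x$, so Theorem \ref{zero_density} applied on the $j$-th strip gives an average count
\[
O\!\left(\frac{e^{24 \cdot 2^j}}{\log x}\cdot C^{-c\theta \cdot 2^j/\log x}\cdot \log C\right) = O\!\left(y\cdot e^{(24 - c\theta y)2^j}\right),
\]
with $c = 8$ in the orthogonal case and $c = 4$ in the symplectic case. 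Once $y$ is so large that $24 - c\theta y < -1$, the summands decay super-geometrically in $j$, so the sum is dominated by its $j=0$ term and is $O\!\left(y\, e^{24 - c\theta y}\right) = o(1)$. Markov's inequality then finishes the job.

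The only loose end is verifying that each pair $(\sigma_j,\, e^{24\cdot 2^j}/\log x)$ falls inside the admissible window $\sigma \in (2/\log C,\, 1/2)$ and $T \in (10/\log C,\, C^{2\delta})$ of Theorem \ref{zero_density}. Because $2^j \le \log x/8$, one gets $\sigma_j \le 1/2$ and $e^{24\cdot 2^j} \le x^3$ for free; the remaining three inequalities all reduce to lower bounds on $y$, which hold eventually since $y \to \infty$ (the tightest being $x^3 < C^{2\delta}$, i.e.\ $y > 3/(2\delta)$). I expect the main difficulty to be bookkeeping rather than conceptual: one has to arrange the constants in the exponent so that the factor $24$ is safely beaten by $c\theta y$ before the geometric summation is performed, which is the reason for the hypothesis $\log x/\log C \to 0$ rather than merely $\log x = o(\log C)$ with an implicit constant.
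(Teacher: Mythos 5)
Your proof is correct and follows the same basic strategy as the paper's: slice the range of $\beta$, bound the zero count in each slice via Theorem~\ref{zero_density}, and sum. The one structural difference is the choice of slicing scale. The paper slices at unit arithmetic scale, $\beta \in (j/\log x, (j+1)/\log x]$ for $j = 4, 5, \dots, \lceil \log x/2\rceil$, which yields a plain geometric series in $j$ with ratio $e^{3 - \theta\log C/\log x}$; once $\theta\log C/\log x > 3$ the series collapses to its first term and is $\ll \frac{\log C}{\log x}e^{-\theta\log C/\log x} = o(1)$. You slice dyadically, which produces a doubly-exponentially decaying series. The dyadic split is a little more wasteful, since the height of the $j$th box grows like $e^{24\cdot 2^j}$ against a gain of only $C^{-\theta\sigma_j}$ with $\sigma_j \asymp 2^j/\log x$; as you note, this forces the need for $\theta\log C/\log x$ to be larger than an absolute constant (around $6/\theta$ rather than the paper's $3/\theta$), which is equally guaranteed by $\log x/\log C \to 0$, so the argument goes through. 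One small bookkeeping slip: with $\sigma_j = 2^{j+2}/\log x$ and the orthogonal conductor $C = k^2$, one has $k^{-2\theta\sigma_j} = C^{-4\theta\cdot 2^j/\log x}$, the same as the symplectic case, so $c = 4$ in both families rather than $c = 8$ and $c = 4$; this does not affect the conclusion, only the precise constant in the eventual threshold on $y$.
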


\begin{proof}
 We may evidently assume that $\log x$ is larger than a fixed constant, and
less than a sufficiently small constant times $\log C$.  Then
\begin{align*}
 \Prob\left[ \exists \; \rho :
\; \beta > \frac{4}{\log x}, \; |\gamma| \leq
\frac{x^{3\beta}}{\log x}\right] & \leq \Prob\left[\bigcup_{j =
4}^{\lceil \frac{\log x}{2}\rceil} \left\{ \exists \; \rho :
\; \beta > \frac{j}{\log x}, \; |\gamma| \leq
\frac{e^{3(j+1)}}{\log x}\right\}\right]
\\ & \leq \sum_{j = 4}^{\lceil \frac{\log x}{2}\rceil}
\Prob\left[\exists \; \rho :
\; \beta > \frac{j}{\log x}, \; |\gamma| \leq
\frac{e^{3(j+1)}}{\log x}\right].
\end{align*}
By applying Theorem \ref{zero_density}, the last sum is bounded by 
\begin{align*} \ll \sum_{j = 4}^{\lceil \frac{\log x}{2}\rceil}
\frac{e^{3(j+1)}}{\log x} C^{-\frac{\theta j}{\log x}} \log C& \leq \frac{\log
C}{\log x} \sum_{j = 4}^{\lceil \frac{\log x}{2}\rceil}
e^{\left(-\theta \frac{\log
C}{\log x} + 3\right) j}  \ll \frac{\log C}{\log x} e^{\frac{-\theta \log C}{
\log x}},
\end{align*} 
and this tends to zero as $C \to \infty$.
\end{proof}

\subsection{Convergence in the sense of distributions}
Before turning to the main argument, we record, for repeated later use, the
following simple fact concerning convergence in the sense of distributions.

Suppose we have a sequence
of finite sets $\{R_n\}$. For each $n$ let there be two functions $f,
\tilde{f}:R_n \to \bR$, so that we obtain two
sequences of probability measures $\{\mu_n\}$, $\{\tilde{\mu}_n\}$ on $\bR$,
\[\mu_n = \frac{1}{|R_n|}\sum_{s \in R_n} \delta_{f(s)}, \qquad \qquad
\tilde{\mu}_n =
\frac{1}{|R_n|}\sum_{s \in R_n} \delta_{\tilde{f}(s)}.\]  
\begin{lemma}[Distribution comparison lemma]\label{conv_dist_lemma}
Let $\mu$ be a finite (Borel)
measure on $\bR$.  Each of the following three conditions is sufficient to
guarantee the simultaneous convergence in distribution
\[ \mu_n \toD \mu \qquad \Leftrightarrow \qquad \tilde{\mu}_n
\toD \mu\]
 of $\mu_n$ and $\tilde{\mu}_n$ to $\mu$.
\begin{align*}
\tag{i}  \frac{1}{|R_n|}\sum_{\substack{s \in R_n \\ f(s) \neq \tilde{f}(s)}} 1
=
o(1), \qquad\qquad &n \to \infty \\
\tag{ii}  \sup_{s \in R_n} |f(s) - \tilde{f}(s)| = o(1), \qquad\qquad &n
\to \infty\\
\tag{iii}  \frac{1}{|R_n|}\sum_{s \in R_n}  |f(s) - \tilde{f}(s)|^2 =
o(1),\qquad\qquad & n
\to \infty.
\end{align*}
\end{lemma}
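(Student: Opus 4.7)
The plan is to test each case against a suitable class of test functions and show that $\int g\, d\mu_n - \int g\, d\tilde{\mu}_n \to 0$ for every bounded continuous $g$; since convergence in distribution to $\mu$ is equivalent to $\int g\, d\mu_n \to \int g\, d\mu$ for all $g$ in a convergence-determining class (for instance $C_c(\bR)$), this gives the desired equivalence. I would organize the argument as three independent cases, proving each directly rather than trying to reduce one to another.

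For case (i), I would write, for any bounded continuous $g$,
\[
\left|\int g\, d\mu_n - \int g\, d\tilde{\mu}_n\right| \leq \frac{1}{|R_n|}\sum_{s:f(s)\neq \tilde{f}(s)} |g(f(s))-g(\tilde{f}(s))| \leq \frac{2\|g\|_\infty}{|R_n|}\#\{s : f(s)\neq \tilde{f}(s)\},
\]
which is $o(1)$ by hypothesis. For case (ii), I would test against $g \in C_c(\bR)$ (which suffices to determine weak convergence). Such $g$ are uniformly continuous, so if $\omega_g$ denotes the modulus of continuity of $g$ and $\epsilon_n = \sup_s |f(s)-\tilde{f}(s)| \to 0$, then
\[
\left|\int g\, d\mu_n - \int g\, d\tilde{\mu}_n\right| \leq \frac{1}{|R_n|}\sum_{s \in R_n} |g(f(s))-g(\tilde{f}(s))| \leq \omega_g(\epsilon_n) \to 0.
\]

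For case (iii), the cleanest approach is probabilistic. Let $X_n$ and $\tilde{X}_n$ be the random variables on $R_n$ (with uniform probability) defined by $f$ and $\tilde{f}$ respectively, so their laws are $\mu_n$ and $\tilde{\mu}_n$. By hypothesis $\mathbb{E}[(X_n - \tilde{X}_n)^2] = o(1)$, hence $X_n - \tilde{X}_n \to 0$ in probability by Chebyshev. By Slutsky's theorem, if $X_n \toD Y$ then $\tilde{X}_n = X_n - (X_n - \tilde{X}_n) \toD Y$, and symmetrically.

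No step is a real obstacle: the lemma is essentially a packaging of the Portmanteau theorem together with Slutsky's theorem, and the only care needed is choosing the right class of test functions in each case (bounded continuous for (i), compactly supported continuous — hence uniformly continuous — for (ii), and moving to the probabilistic formulation for (iii)). The proof should therefore be short, with each of the three cases handled in a line or two.
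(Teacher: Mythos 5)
Your proof is correct in all three cases, and the reasoning is exactly the kind of short verification the lemma calls for: case (i) by testing against bounded continuous $g$ and bounding differences on the exceptional set by $2\|g\|_\infty$; case (ii) by uniform continuity of compactly supported test functions; and case (iii) by Chebyshev plus Slutsky. The paper actually states this lemma without proof (it is labelled a ``simple fact''), so there is no internal argument to compare against — your proposal fills that gap and is consistent with what the author evidently intended.

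One small remark: the lemma allows $\mu$ to be an arbitrary finite Borel measure, but $\mu_n$ and $\tilde{\mu}_n$ are probability measures, so if ``$\toD$'' is understood in the $C_b$ sense then $\mu(\bR)=1$ automatically and all three of your arguments apply verbatim; if it is understood in the vague ($C_c$) sense, your case (ii) argument is already phrased for $C_c$, and cases (i) and (iii) go through as well once one notes that the difference of integrals against $C_c$ functions is all that must vanish. Either reading is covered by what you wrote, so no change is needed.
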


\section{The distribution of the prime sum}
We first show that the short prime sums $(x = C^{o(1)})$
\[\left\{\sum_{n \leq x}
\frac{\Lambda_f(n)}{\sqrt{n}}\right\}_{f \in H_k}, \qquad  \left\{\sum_{n \leq
x}
\frac{\Lambda_{8d}(n)}{\sqrt{n}}\right\}_{d \in S(D)}\] converge to the
appropriate
Gaussian distributions as the conductor $C \to \infty$.  The
main work will
then be in comparing $\log |L(\frac{1}{2}, *)|$ to these sums.

\begin{prp}\label{prime_sum}
Let $C = k^2$ for the family $\mathcal{F} = \{L(s,f)\}_{f \in H_k}$ and $C
= D$ for the family $\{L(s, \chi_{8d})\}_{d \in s(D)}$.  Assume $x = x(C)$ grows
with $C$ in such a way that $\frac{\log x}{\log C}
\to 0$ as $C \to \infty$, but $\log \log x = \log \log C + o(\sqrt{\log
\log C})$.  Define, for $f \in H_k$, \[P(f) = \frac{1}{\sqrt{\log \log k}}
\left( \sum_{n \leq x}
\frac{\Lambda_f(n)}{n^{\frac{1}{2}} \log n} + \frac{1}{2} \log \log
k\right),\] and for $d \in s(D)$,
\[P(d) = \frac{1}{\sqrt{\log \log D}}\left(\sum_{n \leq x}
\frac{\Lambda_{8d}(n)}{n^{\frac{1}{2}}} - \frac{1}{2} \log \log D\right).\] We
have
\begin{equation}\label{conv_of_prime_sum}\frac{1}{|\mathcal{F}|}\sum_{* \in
\mathcal{F}} \delta_{P(*)} \toD
N(0,1), \qquad C \to \infty.\end{equation}
Also, for each $C$ let $\{b_n(C)\}_{n = 1}^\infty$ be a sequence of real
numbers, bounded independently of $C$.  Then 
\begin{equation}\label{mean_error}  \frac{1}{|\mathcal{F}|}\sum_{* \in
\mathcal{F}}\left| \sum_{n <
x^3} \frac{\Lambda_*(n) b_n}{n^{\frac{1}{2}}}\right|^{2} = O(\log^2 x), \qquad
\qquad C \to \infty. \end{equation} 
\end{prp}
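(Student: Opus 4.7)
The plan is the method of moments: since $N(0,1)$ is determined by its moments $(2m-1)!!$ and $0$, it suffices to show $\mathbb{E}_*[P(*)^j] \to \mu_j$, where $\mu_{2m} = (2m-1)!!$ and $\mu_{2m+1} = 0$. I first reduce $P(*)$ to a pure prime sum. Split $\sum_{n \leq x}\Lambda_*(n)/(\sqrt{n}\log n)$ into a sum over primes (yielding $\sum_{p \leq x}\alpha_*(p)/\sqrt{p}$, with $\alpha_f(p) = \lambda_f(p)$ and $\alpha_{8d}(p) = \chi_{8d}(p)$) and a sum over higher prime powers $p^m$, $m \geq 2$. The $m \geq 3$ contribution is $O(1)$ in sup-norm by Deligne's bound. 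The $m = 2$ contribution produces a deterministic main piece of $\pm\frac{1}{2}\log\log C + O(1)$ --- from $\chi_{8d}(p)^2 = 1$ for $p \nmid 8d$ in the symplectic case, and from $\Lambda_f(p^2) = (\lambda_f(p)^2 - 2)\log p$ combined with $-\frac{1}{2}\sum_{p \leq \sqrt{x}} 1/p$ in the orthogonal case --- which exactly cancels the shift $\mp\frac{1}{2}\log \log C$ in the definition of $P(*)$, up to $o(\sqrt{\log \log C})$ by the assumption $\log \log x = \log \log C + o(\sqrt{\log \log C})$. The fluctuating residual (e.g.\ $\frac{1}{2}\sum_{p^2 \leq x}\lambda_f(p^2)/p$) has $L^2$-norm $O(1)$ directly from the orthogonality lemmas. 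Hence, by Lemma~\ref{conv_dist_lemma}(iii), $P(*)$ has the same limit distribution as the normalized prime sum $Q(*) = (\log \log C)^{-1/2}\sum_{p \leq x}\alpha_*(p)/\sqrt{p}$.

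I next compute the $k$-th moment of $Q(*)$ by expansion:
\[
\mathbb{E}_*[Q(*)^k] = \frac{1}{(\log \log C)^{k/2}}\sum_{p_1,\dots,p_k \leq x}\frac{\mathbb{E}_*[\alpha_*(p_1)\cdots\alpha_*(p_k)]}{\sqrt{p_1\cdots p_k}}.
\]
Group the $p_i$ by their distinct values, writing $\prod_j \alpha_*(q_j)^{e_j}$ with $\sum_j e_j = k$. In the symplectic case, multiplicativity gives $\chi_{8d}(\prod_j q_j^{e_j \bmod 2})$, which by the character orthogonality lemma contributes a nontrivial main term only when $\prod_j q_j^{e_j \bmod 2} = 1$, i.e.\ when every $e_j$ is even. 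In the orthogonal case, the Hecke expansion~\eqref{prime_product} rewrites $\prod_j \lambda_f(q_j)^{e_j} = \sum_{\mathbf{i}}c(\mathbf{e},\mathbf{i})\lambda_f(\prod_j q_j^{e_j-2 i_j})$, and Lemma~\ref{H_k_orthogonality} keeps only the summands with $\prod_j q_j^{e_j-2 i_j}$ a square, which again forces every $e_j$ even. Thus only partitions of $[k]$ into blocks of even size contribute to the main term.

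For $k = 2m$, the leading contribution is from the $(2m-1)!!$ pairings (every $e_j = 2$): in the orthogonal case the relevant coefficient is $c(\mathbf{2},\mathbf{1}) = 1$ from the lemma following~\eqref{prime_product}, extracting $\lambda_f(1) = 1$, while in the symplectic case the arithmetic factor $\prod_{p \mid n,\,\text{odd}} p/(p+1)$ equals $1 + O(1/\min_j q_j)$. Summing over distinct $m$-tuples of primes $q_1,\dots,q_m \leq x$ yields $\bigl(\sum_{p \leq x} 1/p\bigr)^m + O((\log \log x)^{m-1}) = (\log \log x)^m + O((\log \log x)^{m-1})$ by Mertens. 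Dividing by $(\log \log C)^m$ and invoking the hypothesis on $\log \log x$ produces the limit $(2m-1)!!$. Partitions containing a block of size $\geq 4$ lose a full factor of $\log \log x$ because $\sum_p p^{-e/2}$ converges for $e \geq 4$, so contribute only $O((\log \log x)^{m-1})$; the Hecke coefficients $c(\mathbf{e},\mathbf{i}) \leq 2^{k}$ are harmlessly bounded for fixed $k$. For odd $k$, no partition into even blocks exists, so only the orthogonality error survives, bounded by $\operatorname{poly}(x)\cdot C^{-\gamma} = o(1)$ since $x = C^{o(1)}$.

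For the $L^2$ bound~\eqref{mean_error}, expanding the square yields
\[
\sum_{n_1,n_2 < x^3}\frac{b_{n_1}\overline{b_{n_2}}}{\sqrt{n_1 n_2}}\,\mathbb{E}_*[\Lambda_*(n_1)\Lambda_*(n_2)],
\]
with $n_i = p_i^{a_i}$ and $|\Lambda_*(p^a)| \ll \log p$ by Deligne. The diagonal $n_1 = n_2 = p^a$ contributes $O\bigl(\sum_{p^a < x^3}(\log p)^2/p^a\bigr) = O(\log^2 x)$ by Mertens extended to $x^3$. Off-diagonal terms survive the orthogonality lemmas only when $n_1, n_2$ are powers of the same prime with matching-parity exponents, or are supported on distinct primes with both exponents even, and each case contributes $O(\log^2 x)$ by analogous Mertens-type estimates. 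Residual orthogonality errors total $O((x^3)^2 C^{-\gamma}) = o(1)$. The main technical obstacle is the orthogonal moment computation: the iterated Hecke coefficients $c(\mathbf{e},\mathbf{i})$ must be tracked through the full prime-tuple sum to confirm that no non-pairing configuration inflates the leading term beyond order $(\log \log C)^m$.
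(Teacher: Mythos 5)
Your proposal is essentially the paper's proof: the same reduction to the pure prime sum $\sum_{p \le x}\alpha_*(p)/\sqrt{p}$ (split off $m\ge 3$ prime powers as $O(1)$, show the $p^2$-term produces the deterministic shift $\pm\frac12\log\log x$ plus an $L^2$-bounded fluctuation, then invoke Lemma~\ref{conv_dist_lemma}), followed by the method of moments with the orthogonality lemmas reducing the moment computation to a count of pairings; the proof of \eqref{mean_error} by squaring out and diagonal/off-diagonal separation is also the same. The only cosmetic difference is that you spell out odd moments and the arithmetic correction factor $\prod q_j/(q_j+1)$ in the symplectic case explicitly, whereas the paper handles the former implicitly and the latter by the phrase ``the argument is essentially the same.''
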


\begin{proof}
We show the proof for the family $\mathcal{F} = \{L(s,f)\}_{f \in H_k}$. The
argument for real characters is essentially the same, with the caveat that the
positive mean  when the family is $\{L(s, \chi_{8d})\}_{d
\in s(D)}$ comes from the fact
that
$\left(\frac{8d}{p^2}\right) = 1$ if $p \nmid 8d$.

For (\ref{conv_of_prime_sum}), let $f \in H_k$ and write 
\[P(f) = \frac{1}{\sqrt{\log \log k}}\left[ \sum_{m = 1}^\infty
\frac{1}{m}
\sum_{p < x^{\frac{1}{m}}} \frac{\Lambda_f(p^m)}{p^{\frac{m}{2}} \log p} +
\frac{1}{2} \log \log k \right].\]  Since $\Lambda_f(p^m) = (\lambda_f(p^m) -
\lambda_f(p^{m-2}))\log p$, with $\lambda_f(p^{-1}) = 0$, we have
\begin{align*} P(f) &= \frac{1}{\sqrt{\log \log k}}\sum_{p <
x}\frac{\lambda_f(p)}{p^{ \frac{1}{2}}} + \frac{1}{2\sqrt{\log \log k}} \sum_{p
< \sqrt{x}} \frac{\lambda_f(p^2)-1}{p} + \frac{1}{2} \sqrt{\log \log k} +
o(1)\\ & =  \frac{1}{\sqrt{\log \log k}}\sum_{p <
x}\frac{\lambda_f(p)}{p^{ \frac{1}{2}}} + \frac{1}{2\sqrt{\log \log k}} \sum_{p
< \sqrt{x}} \frac{\lambda_f(p^2)}{p} + o(1), \end{align*} by
Mertens'
theorem for $\sum \frac{1}{p}$.  Regarding the second term, we may assume that
$k$ is sufficiently
large so that $x^2 < k^{2-\delta}$.  Then using orthogonality for $H_k$,
\begin{align*} \frac{1}{|H_k|}\sum_{f \in H_k} \left[\sum_{p <
\sqrt{x}}\frac{\lambda_f(p^2)}{p}\right]^2 &= \frac{1}{|H_k|} \sum_{f
\in H_k} \sum_{p < \sqrt{x}} \frac{1 + \lambda_f(p^2) +
\lambda_f(p^4)}{p^2} + \frac{1}{|H_k|}\sum_{f \in H_k}\sum_{p_1 \neq p_2 <
\sqrt{x}}
\frac{\lambda_f(p_1^2p_2^2)}{p_1p_2}  \\ &=
\sum_{p < \sqrt{x}} \left(\frac{1}{p^2} + \frac{1}{p^3} + \frac{1}{p^4}\right) +
\sum_{p_1 \neq p_2 \leq x} \frac{1}{p_1^2 p_2^2} + O(k^{-\gamma}\log^2 x) =
O(1),
\end{align*} so that, after normalizing by dividing by $\sqrt{\log \log k}$,
this makes a negligible difference to the distribution ((iii) of Lemma  
\ref{conv_dist_lemma}).  It thus suffices to demonstrate that for the prime sum 
\begin{equation}\label{perturbed_convergence} \frac{1}{|H_k|}\sum_{f \in H_k}
\delta_{\tilde{P}(f)} \toD N(0,1); \qquad \qquad \tilde{P}(f) =
\frac{1}{\sqrt{\log \log k}} \sum_{p < x}
\frac{\lambda_f(p)}{p^{\frac{1}{2}}}.\end{equation} This we do by the method of
moments.

 Let $m$ be fixed and assume
now that $k$ is sufficiently large so that $x^{2m} <
k^{2-\delta}$, $x^m < k^{\frac{\gamma}{2}}$. We have
\begin{align*} \frac{1}{|H_k|}\sum_{f \in H_k}\left| \sum_{p <
x}
\frac{\lambda_f(p)}{p^{\frac{1}{2}}} \right|^{2m} &= \sum_{p_1, ..., p_{2m} <x}
\frac{1}{\sqrt{p_1\cdots p_{2m}}} \frac{1}{|H_k|} \sum_{f \in H_k}
\lambda_f(p_1)\cdots \lambda_f(p_{2m}). \end{align*}
When some $p_i$ appears an odd number of times in the list, we see from the
expression (\ref{prime_product}) that $\lambda_f(p_1)\cdots\lambda_f(p_r)$ can
be written as a linear combination of $O_m(1)$ terms $\lambda_f(n_i)$, for
which none of the $n_i$ are squares.  Thus by Lemma \ref{H_k_orthogonality} the
contribution of all such terms is $\ll_m k^{-\frac{\gamma}{2}}$.

Among terms containing each $p_i$ an even number of times, those containing
some $p_i$ at least 4 times contribute $\ll_m (\log \log x)^{m-2}$, which is an
error term.  We are left to consider terms containing each prime exactly twice.
 These contribute 
\[O_m(k^{-\gamma} (\log \log k)^m) +\sum_{\substack{p_1, ..., p_m < x\\
\text{distinct}}}
\frac{1}{p_1 \cdots p_m}\sum_{d|p_1\cdots p_m} \frac{1}{d} = \frac{(2m)!}{2^m
m!} (\log \log x)^m (1 + o_m(1)).\] The claimed convergence in
(\ref{perturbed_convergence}) thus
follows from the fact that the Gaussian distribution is determined by its
moments.

To prove (\ref{mean_error}), assume $x^6 < \min(k^{2-\delta}, k^\gamma)$ and
split the
primes, prime squares, and higher powers as 
\[\left| \sum_{n < x^3} \frac{\Lambda_f(n) b_n}{n^{\frac{1}{2}}}\right|^2
\leq 3 \left[\left|\sum_{p < x^3}\frac{\lambda_f(p)
b_p \log p}{p^{\frac{1}{2}}}\right|^2 + \left|\sum_{p < x^{\frac{3}{2}}}
\frac{O(\log p)}{p}\right|^2 + O(1)  \right]\]
Thus
\begin{align*}\frac{1}{|H_k|}\sum_{f \in H_k} &\left| \sum_{n < x^3}
\frac{\Lambda_f(n)
b_n}{n^{\frac{1}{2}}}\right|^2 \leq \frac{3}{|H_k|}\sum_{f \in
H_k}\left|\sum_{p <
x^3}\frac{\lambda_f(p)
b_p \log p}{p^{\frac{1}{2}}}\right|^2 + O(\log^2 x)
\\ &\leq 3\sum_{p_1, p_2 \leq x^3} \frac{b_{p_1}b_{p_2}\log p_1 \log
p_2}{\sqrt{p_1 p_2}} \frac{1}{|H_k|}\sum_{f \in H_k}
\lambda_f(p_1)\lambda_f(p_2) +
O(\log^2 x)\\
 &\leq \sum_{p \leq x^3} \frac{O(\log^2 p)}{p} + O(k^{-\gamma/2}) + O(\log^2
x) \\&= O(\log^2 x).
\end{align*}

\end{proof}
\section{Proof of main results}
Throughout this section we let $\mathcal{F}$ be a family of $L$-functions,
either $\mathcal{F} =
\{L(s,f)\}_{f \in H_k}$ or $\mathcal{F} = \{L(s,
\chi_{8d})\}_{d \in s(D)}$, and
we let $C = k^2$ or $C = D$ for the conductor in the family.  We also let $*$
stand in for the typical element in the family. 

\begin{proof}[Proof of Theorem \ref{right_of_half}]
Recall that in this theorem, $\sigma = \sigma(C)$ satisfies $\sigma \log C
\to \infty$ while $\sigma \frac{\log C}{\sqrt{\log \log C}} \to 0$ as $C
\to \infty$.  Choose $x=x(C)$
by $\frac{4}{\log x} = \sigma$. Then we have $\log \log x = \log \log C +
O(\log_3
C)$, so that we may appeal to Proposition \ref{prime_sum}. 

Recall that we set \[A(f) = \frac{1}{\sqrt{\log \log k}} \left(
\log \left|L\left(\frac{1}{2} + \frac{4}{\log
 x},f\right)\right|+ \frac{1}{2} \log \log
k\right)\] and \[ A(d) = \frac{1}{\sqrt{\log \log D}} \left( \log
\left|L\left(\frac{1}{2} + \frac{4}{\log x}, \chi_{8d}\right)\right| -
\frac{1}{2} \log \log
D\right).\]
The theorem then asserts
\[
 \frac{1}{|\mathcal{F}|} \sum_{* \in \mathcal{F}} \delta_{A(*)} \toD N(0,1).
\]

By Lemma \ref{zero_prob} there is a set $E \subset \mathcal{F}$ of measure
$o(1)$ such that outside $E$,
$\sigma_{x,*} = \frac{4}{\log x}$.  Thus by Selberg's
approximation (\ref{log_prime_sum}) we have
\begin{equation}\label{expression_for_log} \log L\left(\frac{1}{2} +
\frac{4}{\log x},*\right) = \sum_{n < x^3}
\frac{\Lambda_{x,*}(n)}{n^{\frac{1}{2} + \frac{4}{\log x}} \log n} +
O\left(\frac{1}{\log x} \left|\sum_{n \leq x^3}
\frac{\Lambda_{x,*}(n) }{n^{\frac{1}{2} +
\frac{4}{\log x}}}\right|\right)
+ O\left(\frac{\log C}{\log x}\right)\end{equation}
on a set of measure $1-o(1)$.  Note that the second
error term contributes $o(1)$ to $A(*)$.   Now
\[\sum_{n < x^3}
\frac{\Lambda_{x, *}(n)}{n^{\frac{1}{2} + \frac{4}{\log x}} \log n} = \sum_{n <
x} \frac{\Lambda_*(n)}{n^{\frac{1}{2}}\log n} + \sum_{n <
x} \frac{\Lambda_*(n)}{n^{\frac{1}{2}}\log n}\left(n^{
\frac{-4}{\log x}} - 1\right) + \sum_{x \leq n < x^3}
\frac{\Lambda_{x,*}(n)}{n^{\frac{1}{2} + \frac{4}{\log x}} \log n}.\]
The first term on the right is the prime sum, for which the convergence in
distribution was proved in Proposition \ref{prime_sum}.   
Thus it will suffice to show that the first error term of
(\ref{expression_for_log}) and the second and third terms above do not alter the
distribution.

 Applying (\ref{mean_error}) of Proposition \ref{prime_sum} successively with
corresponding choices of $b_n$, we find that 
\begin{align*}
 \frac{1}{|\mathcal{F}|}\sum_{* \in \mathcal{F}} \left[\frac{1}{\log x}
\sum_{n \leq x^3}
\frac{\Lambda_{x,*}(n) }{n^{\frac{1}{2} +
\frac{4}{\log x}}} \right]^2 = O(1), & \;\; 
 b_n = \frac{a_x(n)}{n^{\frac{4}{\log x}}}\\
\frac{1}{|\mathcal{F}|}\sum_{* \in \mathcal{F}} \left[\sum_{n <
x} \frac{\Lambda_*(n)}{n^{\frac{1}{2}}\log n}\left(n^{
\frac{-4}{\log x}} - 1\right) \right]^2 = O(1), & \;\;
  b_n = \left\{
\begin{array}{ll} \frac{\log
x}{\log n}\left(n^{\frac{-4}{\log x}} - 1\right),  &n \leq  x
\\ 0,& x \leq  n \end{array} \right. \\
\frac{1}{|\mathcal{F}|}\sum_{* \in \mathcal{F}} \left[ \sum_{x \leq n < x^3}
\frac{\Lambda_{x,*}(n)}{n^{\frac{1}{2} + \frac{4}{\log x}} \log n}\right]^2 =
O(1), & \;\;
  b_n = \left\{
\begin{array}{lll}0, && n < x\\ \frac{a_x(n)}{n^{\frac{4}{\log x}}} \frac{\log
x}{\log n}, && x \leq n <
x^3 \end{array} \right..
\end{align*}
Thus by the distribution comparison lemma, Lemma \ref{conv_dist_lemma},
\[\frac{1}{|\mathcal{F
}|} \sum_{* \in \mathcal{F}} \delta_{P(*)} \toD N(0,1) \qquad
\Rightarrow \qquad \frac{1}{|\mathcal{F}|} \sum_{* \in \mathcal{F}}
\delta_{A(*)} \toD N(0,1).\]
\end{proof}

We will deduce Corollary \ref{upper_bound} from Theorem \ref{right_of_half} by
comparing $\log |L(\frac{1}{2}, *)|$ and $\log |L(\frac{1}{2} +
\sigma_{x,*},*)|$. Suppose that $L(\frac{1}{2},*) \neq 0$.  Then choosing a
line integral that makes a small semi-circle to avoid any real zero of $L(s,
*)$,
\begin{align} \notag\log \left|L\left(\frac{1}{2},*\right)\right|-&\log
L\left(\frac{1}{2} +
\sigma_{x,*},*\right) =   \Re \int_{0}^{ \sigma_{x,*}} -
\frac{L'}{L}\left(\frac{1}{2} + \sigma,*\right)d\sigma 
\\&\label{upper_bound_sum} = \log{\frac{\gamma_*(\frac{1}{2} +
\sigma_{x,*})}{\gamma_*(\frac{1}{2})}
} - \sum_{\rho = \frac{1}{2} + \beta + i \gamma:\; \Lambda(\rho,*) = 0}
\Re \int_0^{\sigma_{x,*}} \frac{ \sigma - \beta}{( \sigma
-\beta)^2 + \gamma^2} d\sigma\\
\notag& = O(1) + \frac{1}{2}\sigma_{x,*} \log C  - \frac{1}{2}\sum_{\rho
=
  \frac{1}{2} + \beta + i \gamma: \Lambda(\rho, *)=0}
\log\left(\frac{( \sigma_{x,*} - \beta)^2 + \gamma^2}{ \beta^2 +
\gamma^2}\right),
\end{align}
as follows from the logarithmic derivative of the Hadamard product for
$L(s,*)$ (\ref{hadamard_derivative}), and the approximation
(\ref{log_deriv_gamma})
to the logarithmic derivative of the gamma factor.
When a zero $\rho$ is  far to the right of
the critical line, we pair the contribution of $\rho$ with that of its
reflection $\rho'$ in the line $\Re(s) = \frac{1}{2}$. Combined these contribute
\begin{align} \notag \log &\left[\frac{( \sigma_{x,*} - \beta)^2 +
\gamma^2}{ \beta^2 + \gamma^2} \cdot \frac{(\sigma_{x,*} + \beta)^2 + \gamma^2}{
\beta^2 +
\gamma^2}\right]
\\& \notag \qquad= \log \left[\left( 1 + \frac{(\sigma_{x,*}  -
2\beta)\sigma_{x,*}}{ \beta^2 + \gamma^2}\right) \left(1 +
\frac{(\sigma_{x,*}  + 2 \beta)\sigma_{x,*}}{\beta^2
+\gamma^2}\right)\right]
\\& \label{paired_sum}\qquad = \log\left[1 + \frac{\sigma_{x,*}^2}{ \beta^2 +
\gamma^2}\left( 2 + \frac{\sigma_{x,*}^2 - 4\beta^2}{ \beta^2 +
\gamma^2}\right)\right]
\end{align}
to the sum over zeros in (\ref{upper_bound_sum}).

Corollary \ref{upper_bound} is now deduced by applying the following
Proposition, which is an analog of the upper bound in the
Proposition of \cite{sound_moments},
in the case when RH for the $L$-function is not assumed.

\begin{prp}\label{our_upper_bound}
Continue to let $\mathcal{F} = \{L(s,f)\}_{f \in H_k}$ or
$\mathcal{F} = \{L(s,
\chi_{8d})\}_{d \in s(D)}$, and let $C$ be the conductor of the $L$-functions
in
the family. For $\sigma_{x,*}$ as defined in
(\ref{def_sigma}) we have
\[ \log \left|L\left(\frac{1}{2},*\right)\right| \leq \log L\left(\frac{1}{2} +
\sigma_{x,*},*\right) +
\sigma_{x,*}\log C + O(1).\]

\end{prp}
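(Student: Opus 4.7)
I would follow Soundararajan's argument in \cite{sound_moments}, adapted to the unconditional setting by using Selberg's perturbation $\sigma_{x,*}$ in place of the Riemann Hypothesis. The starting point is the identity
\[
\log|L(\tfrac{1}{2},*)| - \log|L(\tfrac{1}{2}+\sigma_{x,*},*)| = -\int_0^{\sigma_{x,*}} \mathrm{Re}\,\frac{L'}{L}(\tfrac{1}{2}+\alpha,*)\,d\alpha,
\]
valid because $\frac{d}{d\alpha}\log|L(\frac{1}{2}+\alpha,*)| = \mathrm{Re}(L'/L)(\frac{1}{2}+\alpha,*)$ on the real axis away from zeros. Substituting the Hadamard expression of Lemma \ref{hadamard} (via the symmetrized zero sum in (\ref{zero_sum})) and integrating term by term would produce
\[
\log|L(\tfrac{1}{2},*)| = \log|L(\tfrac{1}{2}+\sigma_{x,*},*)| + \bigl[\log|\gamma_*(\tfrac{1}{2}+\sigma_{x,*})|-\log|\gamma_*(\tfrac{1}{2})|\bigr] - \Sigma,
\]
where $\Sigma := \sum_{\rho=\frac{1}{2}+\beta+i\gamma}\tfrac{1}{2}\log\frac{(\sigma_{x,*}-\beta)^2+\gamma^2}{\beta^2+\gamma^2}$. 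Stirling applied to the gamma factors in (\ref{def_archimedian_factor}) would bound the archimedean bracket by $\tfrac{1}{2}\sigma_{x,*}\log C + O(1)$, supplying the main $\sigma_{x,*}\log C$ of the claim with room to spare.

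The remaining task is to show $\Sigma \geq -\tfrac{1}{2}\sigma_{x,*}\log C - O(1)$. I would split the zeros by the sign of $\beta - \sigma_{x,*}/2$. Zeros with $\beta \leq \sigma_{x,*}/2$ satisfy $(\sigma_{x,*}-\beta)^2 \geq \beta^2$ and contribute nonnegatively to $\Sigma$. For a zero with $\beta > \sigma_{x,*}/2 \geq 2/\log x$, the defining condition (\ref{def_sigma}) forces $\rho \notin \cG_{x,*}$ (otherwise $\sigma_{x,*}\geq 2\beta$ would contradict $\beta > \sigma_{x,*}/2$), and hence $|\gamma| > x^{3\beta}/\log x$. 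Using $(\sigma_{x,*}-\beta)^2 - \beta^2 = \sigma_{x,*}(\sigma_{x,*}-2\beta)$ together with $\log(1-y) \geq -2y$ for $y \geq 0$ small, each such zero contributes to $\Sigma$ at least $-\sigma_{x,*}\beta/(\beta^2+\gamma^2)$. Bounding the resulting sum by an integral over the region $\{\beta > \sigma_{x,*}/2,\, |\gamma| > x^{3\beta}/\log x\}$ with the Riemann--von Mangoldt density $N(T) - N(T-1) \ll \log C$ and substituting $u = 3\beta\log x$ would give total negative contribution $O(\sigma_{x,*}\log C/\log x)$, comfortably absorbed into the Proposition's allowance once $\log x \to \infty$.

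\textbf{Main obstacle.} The delicate step is the pointwise bound on zeros outside $\cG_{x,*}$. Theorem \ref{zero_density} furnishes only average density bounds over the family, so the pointwise argument for an individual $*$ must fully exploit the exponential separation $|\gamma|>x^{3\beta}/\log x$ built into $\sigma_{x,*}$ together with the general zero count $N(T) \ll (1+T)\log C$. The tight regime is near $\beta \approx \sigma_{x,*}/2 \approx 2/\log x$, where the separation imposes only $|\gamma| \gg 1/\log x$ and the per-zero negative bound $\sigma_{x,*}\beta/(\beta^2+\gamma^2)$ is largest; verifying that the dyadic integration in $\beta$ still closes up to the claimed $O(\sigma_{x,*}\log C)$ is the principal calculation.
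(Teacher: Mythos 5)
Your setup is right as far as it goes: the integral identity for $\log|L(\tfrac12,*)|-\log L(\tfrac12+\sigma_{x,*},*)$, the Hadamard/archimedean decomposition via Lemma~\ref{hadamard}, the Stirling bound $\leq\sigma_{x,*}\log C+O(1)$ for the gamma bracket, and the observation that only zeros with $\beta>\sigma_{x,*}/2$ (which are automatically outside $\cG_{x,*}$, hence obey $|\gamma|>x^{3\beta}/\log x$) can contribute negatively to $\Sigma$. This all matches the paper's opening moves. But the step where you estimate those negative contributions directly is the step that does not close, and it hides the one genuinely new idea in the paper's proof.

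The problem is that the pointwise local zero count is only $N(T+1)-N(T)\ll\log C$, and this gives no improvement on subwindows of height less than $1$. For a single $L(s,*)$, nothing you have forbids $\asymp\log C$ zeros all lying near $\beta\approx j/\log x$ and $|\gamma|\approx e^{3j}/\log x$ for some moderate $j$. Each such zero contributes about $\sigma_{x,*}\,j\log x\,e^{-6j}$ in absolute value to the negative part of $\Sigma$, so the total can be as large as a constant times $\sigma_{x,*}\,(\log x)(\log C)$, which is $\log x$ \emph{times larger} than the Proposition's allowance $\sigma_{x,*}\log C+O(1)$ (and vastly larger than the $O(\sigma_{x,*}\log C/\log x)$ you asserted). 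The density bound from Theorem~\ref{zero_density} would repair this on average over the family but is unavailable for the pointwise inequality the Proposition asserts, and the "dyadic integration in $\beta$" you describe implicitly assumes a density that is $\ll G\log C$ in a $\gamma$-window of height $G<1$, which is exactly what you do not have.

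What the paper does instead is never estimate the negative piece at all. Because $L(s,*)$ is self-dual with real coefficients, if $\rho=\tfrac12+\beta+i\gamma$ is a zero then so is $\rho'=\tfrac12-\beta+i\gamma$; pairing these, the combined contribution to $\Sigma$ is
\[
\tfrac12\log\Bigl[1+\frac{\sigma_{x,*}^2}{\beta^2+\gamma^2}\Bigl(2+\frac{\sigma_{x,*}^2-4\beta^2}{\beta^2+\gamma^2}\Bigr)\Bigr],
\]
and the separation $|\gamma|>x^{3\beta}/\log x\geq3\beta$ is used only to certify that $\gamma^2\geq9\beta^2$, so the inner bracket lies in $[\tfrac85,2]$ and the whole expression is nonnegative. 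Thus $\Sigma\geq0$ outright, no density input needed. This pairing is the precise unconditional form of Soundararajan's observation that critical zeros can only decrease $\log|L(\tfrac12,*)|$ relative to a point off the line, and it is the idea your write-up is missing. Without it the argument fails once $\log x\to\infty$, which is exactly the regime used in the deductions of Corollary~\ref{upper_bound} and Theorem~\ref{conditional}.
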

\begin{proof}
Since $L(\frac{1}{2} + \sigma_{x, *}, *) \neq 0$, if $L(\frac{1}{2}, *) = 0$
then the claim holds, so we may suppose that $L(\frac{1}{2}, *) \neq 0$.  For
zeros $\rho= \frac{1}{2} + \beta + i \gamma$ such that $|\beta| <
\frac{\sigma_{x,*}}{2}$, the contribution of $\beta$ to (\ref{upper_bound_sum})
is clearly negative, so we may assume $\beta > \frac{\sigma_{x,*}}{2}$
and pair $\rho$ and $\rho' = \rho - 2\beta$.  Since $\beta >
\frac{\sigma_{x,*}}{2}$ we have $\rho \not \in
\mathcal{G}_*$ and thus
\[ \gamma > \frac{x^{3\beta}}{\log x} \geq 3\beta \] so that
\begin{equation}\label{bracket_bound}2 \geq 2 + \frac{\sigma_{x,*}^2 - 4
\beta^2}{ \beta^2
+ \gamma^2} \geq 2 -\frac{4\beta^2}{10\beta^2}
\geq \frac{8}{5}\end{equation}  
Hence 
\[ \text{expr. }(\ref{paired_sum}) \geq \log\left[1 + \frac{8}{5}
\frac{\sigma_{x,*}^2}{\beta^2 + \gamma^2}\right] > 0.\]  It follows that the
paired
zeros also contribute a
negative amount to (\ref{upper_bound_sum}), which proves the Proposition.
\end{proof}

\begin{proof}[Deduction of Corollary \ref{upper_bound}]
 Take $x = x(C)$ growing such that $\frac{\log C}{\log x} \to \infty$ but
$\frac{\log C}{\log x \sqrt{\log \log C}} \to 0$ as $C \to \infty$, as in
the proof of Theorem \ref{right_of_half}.  Then as before we have $\sigma_{x,*}
= \frac{4}{\log x}$ except for on a set of measure $o(1)$. It
follows from Proposition \ref{our_upper_bound} that 
\[\frac{\log |L(\frac{1}{2},*)|}{\sqrt{\log \log C}} \leq \frac{\log
L(\frac{1}{2}+
\frac{4}{\log x},*)}{\sqrt{\log
\log C}} + o(1)\]
except on a set of  measure $o(1)$, and the Corollary now follows from the
convergence in distribution of the right hand side proved in Theorem
\ref{right_of_half}.
\end{proof}

We now prove Theorem \ref{conditional} by bounding the negative contribution of
the zeros in (\ref{upper_bound_sum}) by invoking the Low-lying Zero
Hypothesis.

\begin{proof}[Proof of Theorem \ref{conditional}]
 Let $x = x(C)$ and $y = y(C)$ be parameters growing with $C$, satisfying the 
 conditions
\begin{enumerate}
 \item $\frac{\log C}{\log x} \to \infty $
\item  $\frac{\sqrt{\log \log C}}{\log y} \to \infty$, but
$\frac{\log
C}{y \log x} \to 0$
 \item $\frac{\log C \log y}{\log x \sqrt{\log \log C}} \to 0$
\end{enumerate}
as $C \to \infty$.  For instance, these are simultaneously satisfied with \[\log
x = \log C (\log \log C)^{-\frac{1}{4}} ,\qquad y = \log \log C.\]

Since $\frac{\log C}{\log x} \to \infty$,
$\sigma_{x,*} = \frac{4}{\log x}$ except on a set of measure $o(1)$ in
$\mathcal{F}$.  Thus, invoking the Low-lying Zero Hypothesis, there is a subset 
 $\mathcal{F}_0 \subset \mathcal{F}$ of measure $1-o(1)$ in $\mathcal{F}$ which
satisfies, for all $* \in \mathcal{F}_0$, 
$\sigma_{x,*} = \frac{4}{\log x}$ and for all
zeros $\rho = \frac{1}{2} + \beta + i\gamma$ of
$\Lambda(s,*),$  \[|\gamma| > \frac{1}{y \log x}.\]   Restricting to
$\mathcal{F}_0$, by (\ref{upper_bound_sum})
we have
\begin{align*}&\log \left|L\left(\frac{1}{2} + \frac{4}{\log x},*\right)\right|
- \log \left|L\left(\frac{1}{2},*\right)\right| =
\\&\qquad\qquad\qquad O\left(\frac{\log C}{\log x}\right) +
\frac{1}{2}\sum_{\substack{\rho =
  \frac{1}{2} + \beta + i \gamma\\\Lambda(\rho,*) = 0}}
\log\left(\frac{( \frac{4}{\log x} - \beta)^2 + \gamma^2}{ \beta^2 +
\gamma^2}\right).\end{align*}
In view of $\frac{\log C}{\log x} = o(\sqrt{\log \log C})$ this error term does
not alter the distribution, so that, appealing to (iii) of Lemma
\ref{conv_dist_lemma} it suffices to prove
the bound 
\begin{equation*}\tag{\dag}\label{bound} \frac{1}{|\mathcal{F}|} \sum_{* \in
\mathcal{F}_0} \left|\sum_{\rho =
  \frac{1}{2} + \beta + i \gamma:\Lambda(\rho,*) = 0}
\log\left(\frac{( \frac{4}{\log x} - \beta)^2 + \gamma^2}{ \beta^2 +
\gamma^2}\right)\right|^2 = o(\log \log C)\end{equation*} in order to deduce
the theorem from comparison with 
the normal approximation of $\log L(\frac{1}{2} + \frac{4}{\log x}, *)$ proved
in Theorem \ref{right_of_half}.

In the sum over zeros of (\ref{bound}), for $\rho$
with $|\beta| < \frac{2}{\log x}$ we bound the summand in absolute value by 
\begin{align*}
 \left|\log\left(\frac{( \sigma_{x,*} - \beta)^2 + \gamma^2}{ \beta^2 +
\gamma^2}\right)\right| &= \left|\log \left(1- \frac{(\sigma_{x,*} - 
\beta)^2 - \beta^2}{(\sigma_{x,*} - \beta)^2 + \gamma^2}\right)\right|
\\& \leq \left| \log \left(1 - \frac{\sigma_{x,*}^2}{\sigma_{x,*}^2
+ \gamma^2}\right) \right| \\&= \left| \log \left(1 - \frac{1}{1
+ (\frac{\gamma \log x}{4})^2}\right) \right| \ll 
\frac{\log y}{1 + (\frac{\gamma\log x}{4})^2},
\end{align*}
by using $|\gamma| \log x \gg \frac{1}{y}.$
Since $|\beta| \leq \frac{2}{\log x}$, the last quantity is bounded by
\begin{equation}\label{small_beta_bound} \ll \frac{\log y}{\log^2 x}
\frac{1}{(\frac{4}{\log x} - \beta)^2 +\gamma^2}.\end{equation}

For $|\beta| > \frac{2}{\log x} = \frac{\sigma_{x,*}}{2}$ we pair the
contributions of $\rho$ and $\rho' = \rho-2\beta$. By (\ref{paired_sum}) this
combined contribution is
\[\log\left[1 + \frac{\sigma_{x,*}^2}{ \beta^2 +
\gamma^2}\left( 2 + \frac{\sigma_{x,*}^2 - 4\beta^2}{ \beta^2 +
\gamma^2}\right)\right].\]
Since $|\beta| > \frac{\sigma_{x,*}}{2}$ we have $\rho, \rho' \not \in
\mathcal{G}_{x,f}$, and therefore $|\gamma| \geq \frac{x^{3\beta}}{\log x}
\geq 3 \beta$.  By (\ref{bracket_bound}), 
\[2 \geq 2 + \frac{\sigma_{x,*}^2 - 4\beta^2}{ \beta^2 +
\gamma^2} \geq \frac{8}{5},\] and therefore the combined contribution is
bounded in absolute value by
\begin{equation}\label{large_beta_bound}\log \left[1 +
\frac{\sigma_{x,*}^2}{\beta^2 + \gamma^2}\left(2 + \frac{\sigma_{x,*}^2 -4
\beta^2}{\beta^2 + \gamma^2}\right)\right]\ll \frac{\sigma_{x,*}^2}{\beta^2 +
\gamma^2} \ll \frac{1}{\log^2 x} \frac{1}{(\frac{4}{\log x} - \beta)^2 +
\gamma^2}.\end{equation}

Combining (\ref{small_beta_bound}) and (\ref{large_beta_bound}), 
\begin{align*}\left|\sum_{\rho =
  \frac{1}{2} + \beta + i \gamma}
\log\left(\frac{( \frac{4}{\log x} - \beta)^2 + \gamma^2}{ \beta^2 +
\gamma^2}\right) \right| &= O\left( \frac{\log y}{\log^2 x} \sum_\rho
\frac{1}{(\frac{4}{\log x} - \beta)^2 + \gamma^2}\right) \\&= O\left(\frac{\log
y}{\log x}\left| \sum_{n \leq x^3} \frac{\Lambda_{x,*}(n)}{n^{\frac{1}{2}+
\frac{4}{\log x}}}\right|\right) + O\left(\frac{\log y \log C}{\log
x}\right)\end{align*}
by Selberg's bound for sums over zeros in terms of sums
over primes, (\ref{zero_sum_bound}) of Lemma \ref{log_L_off}. 

Since 
\[\frac{\log y \log C}{\log x } = o(\sqrt{\log \log C}),\] the bound
(\ref{bound}) now follows from 
\[\frac{1}{|\mathcal{F}|}{\sum_{* \in \mathcal{F}^*}} \left| \frac{\log
y}{\log x }\sum_{n
\leq x^3} \frac{\Lambda_{x,*}(n)}{n^{\frac{1}{2}+
\frac{4}{\log x}}}\right|^2 = O\left(\log^2 y \right) =
o(\log \log C)\] by (\ref{mean_error}) of Proposition \ref{prime_sum}.
\end{proof}

\bibliographystyle{plain}
\bibliography{central_value}

\end{document}